\newcommand{\lob}[1]{\left( #1 \right)} 
\newcommand{\labs}[1]{\left| #1 \right|} 
\newcommand{\iseq}[1]{0,\, 1,\, \ldots\,,\, #1-1}
\newcommand{\xseq}[2]{{#1}_0,\, {#1}_1,\, \ldots \,,\, {#1}_{#2-1}}
\newcommand{\iseqe}[1]{0,\, 1,\, \ldots \,,\, #1}
\newcommand{\nn}{\nonumber}
\NewDocumentCommand{\mymat}{mmmm}
{
	\tl_clear:N \l_tmpa_tl
	\int_step_inline:nnn  {0}{ #2 -1 }
	{
		\int_step_inline:nnn {0}{ #3 -1}
		{
			\tl_put_right:Nn \l_tmpa_tl {$#1{\csname #4 \endcsname {##1}{####1} {#2} {#3}}$}
			\int_compare:nNnTF {####1} < {#3-1} 
			{\tl_put_right:Nn \l_tmpa_tl {&}} {}
		}
		\tl_put_right:Nn \l_tmpa_tl {\\}
	}
	\tl_use:N \l_tmpa_tl
}
	\pgfmathtruncatemacro{\val}{#1 * #2 }
	\pgfmathtruncatemacro{\val}{mod(\bitsetCardinality{A}, 2)}
	\pgfmathtruncatemacro{\val} {1}		
	\pgfmathtruncatemacro{\val} {-1}	
\newcommand{\mybMat}[2]{
	\begin{bNiceMatrix}[r]
		\mymat{}{#1}{#1}{#2}
	\end{bNiceMatrix}
}
\newcommand{\genMatrix}[3]{
	\xintdefiivar Matrix = ndmap(#3,1..#1; 1..#2); 
	\[
	\def\xintexpralignbegin {\begin{bmatrix*}[r]}%
		\def\xintexpralignend {\end{bmatrix*}}%
	\def\xintexpralignlinesep {\noexpand\\}
	\def\xintexpraligninnersep {&}%
	\let\xintexpralignleftbracket\empty \let\xintexpralignleftsep\empty
	\let\xintexpralignrightbracket\empty \let\xintexpralignrightsep\empty
	\xintthealign	\xintiiexpr Matrix\relax
	\]
}
\DeclarePairedDelimiter\floor{\lfloor}{\rfloor}                                              
\def\paragraph{\@startsection{paragraph}{4}%
	\z@\z@{-\fontdimen2\font}%
	{\normalfont\bfseries}}
\newlength\shlength
\newcommand\xshlongvec[2][0]{\setlength\shlength{#1pt}%
	\stackengine{-5.6pt}{$#2$}{\smash{$\kern\shlength%
			\stackengine{7.55pt}{$\mathchar"017E$}%
			{\rule{\widthof{$#2$}}{.57pt}\kern.4pt}{O}{r}{F}{F}{L}\kern-\shlength$}}%
	{O}{c}{F}{T}{S}}
\newcommand{\RN}[1]{%
	\textup{\uppercase\expandafter{\romannumeral#1}}%
}
\newcommand{\meqref}[1]{\text{Eq}.~\eqref{#1}}
\newcommand{\mref}[1]{Sec.~$ \!\ref{#1} $}
\newcommand{\mfig}[1]{Fig.~$ \!\ref{#1} $}
\newtheorem{thm}{Theorem}[section]
\newtheorem{lem}[thm]{Lemma}
\newtheorem{cor}[thm]{Corollary}
\newtheorem{defn}[thm]{Definition} 
\newtheorem{example}[thm]{Example} 
\newtheorem{remark}[thm]{Remark}
\newcommand{\RR}{\mathbb{R}}      
\newcommand{\ZZ}{\mathbb{Z}}      
\newcommand{\mat}[4]{\left[\begin{smallmatrix*}[r]
		#1 & #2 \\
		#3 & #4 \\
	\end{smallmatrix*}\right]}
\def\FF{{\mathbb F}}
\def\NN{{\mathbb N}}
\def\RR{{\mathbb R}}
\def\ZZ{{\mathbb Z}}
\def\mm{{\frak m}}
\def\nn{{\frak n}}
\def\xx{{\frak x}}
\def\yy{{\frak y}}
\DeclareMathOperator {\Seqz} {Seq}
\def\<{\langle}
\def\>{\rangle}
\newcommand{\Seq}[1]{\mathcal{S}_{#1}}
\newcommand{\myseqc}{sequency-complete}
\newcommand{\Myseqc}{Sequency-complete}
\newcommand{\myseqi}{sequency-incomplete}
\newcommand{\myseqo}{sequency-ordered}
\newcommand{\Myseqo}{Sequency-ordered}
\newcommand{\Mset}{M_{n}(\FF)}
\newcommand{\Msetg}[1]{M_{#1}(\FF)}
\numberwithin{equation}{section}
\pgfplotsset{compat=1.17}
\def\smallunderbrace#1{\mathop{\vtop{\m@th\ialign{##\crcr
				$\hfil\displaystyle{#1}\hfil$\crcr
				\noalign{\kern3\p@\nointerlineskip}%
				\tiny\upbracefill\crcr\noalign{\kern3\p@}}}}\limits}
\newcommand{\bseqg}[3]{\pgfmathparse{int(#2 + 1)} \text{${#1}_{#3}  \ldots {#1}_{\pgfmathresult} {#1}_{#2}$}}
\begin{document}
	\title{On sequency-complete and sequency-ordered matrices}
	\author[1]{Alok Shukla \thanks{Corresponding author.}}
	\author[2]{Prakash Vedula}
	\affil[1]{School of Arts and Sciences, Ahmedabad University, India}
	\affil[1]{alok.shukla@ahduni.edu.in}
	\affil[2]{School of Aerospace and Mechanical Engineering, University of Oklahoma, USA}
	\affil[2]{pvedula@ou.edu}
	
\date{}

	\maketitle

\begin{abstract}
The concept of sequency holds a fundamental significance in signal analysis using Walsh basis functions. In this study, we closely examine the concept of sequency and explore the properties of sequency-complete and sequency-ordered matrices. We obtain results on  cardinalities of sets containing sequency-complete and sequency-ordered matrices of arbitrary sizes.  We present methods for obtaining interesting classes of sequency-complete and sequency-ordered matrices of arbitrary sizes. We also provide results on the sequencies of columns in tensor products involving two or more matrices.

\end{abstract}

\section{Introduction} \label{sec:intro}
Analogous to the notion of frequency in Fourier analysis, the concept of sequency, as defined below, plays a fundamental role in signal analysis based on Walsh basis functions \cite{walsh1923closed, beauchamp1975walsh}. We note that, 
Walsh basis functions $ W_k (x) $ for $  k =0,~1,~2, ~\ldots~ N-1 $  in sequency order are defined as follows
\begin{align}
	W_0(x) &= 1 \quad \text{for } 0 \leq x \leq 1,  \nonumber\\
	W_{2k} (x) &= W_k(2t) + (-1)^k W_k (2x -1 ), \nonumber \\
	W_{2k+1} (x) &= W_k(2t) - (-1)^k W_k (2x -1 ), \nonumber\\
	W_k(x) &= 0 \quad \text{for } x < 0 \text{ and } x >1, \label{eq_def_Walsh}
\end{align}
where $ N $ is an integer of the form $ N = 2^n$. 
For $ N =8 $, Walsh basis functions in sequency order are shown in \mfig{fig_walsh_sequency}. The
number of sign changes (or zero-crossings) for Walsh basis functions increases as the orders of the functions increase.

\begin{figure}
	\centering
	\begin{subfigure}{0.24\textwidth}
		\centering
		\includegraphics[width=\linewidth]{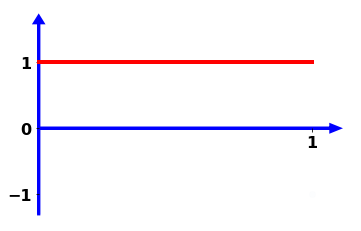}
		\caption{$W_0(x)$}
		\label{fig:WS0}
	\end{subfigure}
	\begin{subfigure}{0.24\textwidth}
		\centering
		\includegraphics[width=\linewidth]{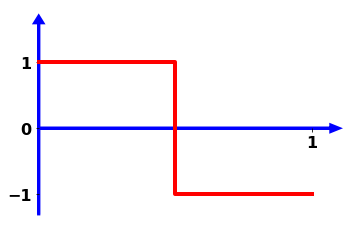}
		\caption{$W_1(x)$}
		\label{fig:WS1}
	\end{subfigure}
	\begin{subfigure}{0.24\textwidth}
		\centering
		\includegraphics[width=\linewidth]{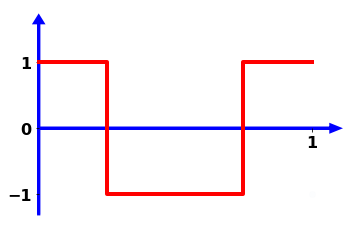}
		\caption{$W_2(x)$}
		\label{fig:WS2}
	\end{subfigure}
	\begin{subfigure}{0.24\textwidth}
		\centering
		\includegraphics[width=\linewidth]{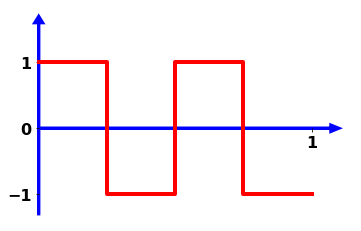}
		\caption{$W_3(x)$}
		\label{fig:WS3}
	\end{subfigure}
	\\
	\begin{subfigure}{0.24\textwidth}
		\centering
		\includegraphics[width=\linewidth]{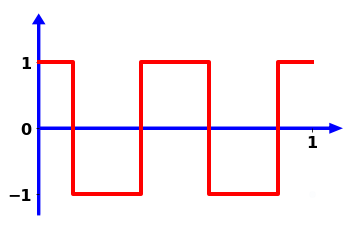}
		\caption{$W_4(x)$}
		\label{fig:WS4}
	\end{subfigure}
	\begin{subfigure}{0.24\textwidth}
		\centering
		\includegraphics[width=\linewidth]{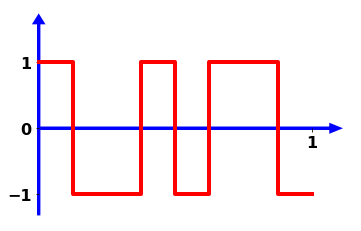}
		\caption{$W_5(x)$}
		\label{fig:WS5}
	\end{subfigure}
	\begin{subfigure}{0.24\textwidth}
		\centering
		\includegraphics[width=\linewidth]{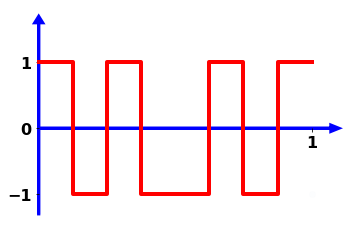}
		\caption{$W_6(x)$}
		\label{fig:WS6}
	\end{subfigure}
	\begin{subfigure}{0.24\textwidth}
		\centering
		\includegraphics[width=\linewidth]{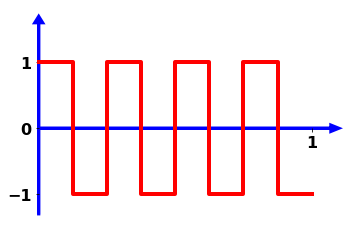}
		\caption{$W_7(x)$}
		\label{fig:WS7}
	\end{subfigure}
 \caption{Walsh basis functions in sequency-order for $N=8$.}
	\label{fig_walsh_sequency}
\end{figure}

 The sequency-ordered (refer Def.~\ref{def:seq_ordered}) Walsh-Hadamard transform matrix  $H_N^S$ is obtained by appropriately arranging the vectors obtained from the sampling of Walsh basis functions as the columns of a matrix. The Walsh-Hadamard transform matrix of order $ N=8 $ in sequency order is
\begin{align} \label{eq:WalshSeq}
H^S_8 = 
	\frac{1}{\sqrt{8}} \,
	\begin{pmatrix*}[r]
		1 & 1 & 1 & 1 & 1 & 1 & 1 & 1  \\
		1 & 1 & 1 & 1 & -1 & -1 & -1 & -1  \\
		1 & 1 & -1 & -1 & -1 & -1 & 1 & 1  \\
		1 & 1 & -1 & -1 & 1 & 1 & -1 & -1  \\
		1 & -1 & -1 & 1 & 1 & -1 & -1 & 1  \\
		1 & -1 & -1 & 1 & -1 & 1 & 1 & -1  \\
		1 & -1 & 1 & -1 & -1 & 1 & -1 & 1  \\
		1 & -1 & 1 & -1 & 1 & -1 & 1 & -1  \\
	\end{pmatrix*}.
\end{align}

More generally, a Hadamard matrix $H$ of order $n$ is defined to be a matrix with orthogonal rows and columns, where each entry is either $1$ or $-1$, and  $ H H^T = n I_n$, where $I_n$ is the identity matrix of order $n$. 
The matrix $H_2 = \mat{1}{1}{1}{-1}$ is an example of Hadamard matrix. The matrix $H_2^{\otimes n}$ is a Hadamard matrix of order $2^n$. However, it is not \myseqo, and it is considered to be in the natural order~\cite{beauchamp1975walsh,geadah1977natural}.

Walsh-Hadamard transform in natural and sequency order have a wide range of applications in diverse fields including quantum algorithms, digital signal and image processing and solution of non-linear differential equations among others \cite{deutsch1992rapid, bernstein1993quantum, shukla2023generalization, simon1997power, grover1996fast, shor1999polynomial,
shuklavedula2019, shukla2023quantum, shukla2024efficient, beer1981walsh}.

In this work, we explore the notion of sequency in more detail and investigate some properties of \myseqc \, and \myseqo \, matrices. We believe that the notion of \myseqc \, matrices is new and has not been studied before. We give the result on cardinalities of the sets of all  \myseqc \, and \myseqo \, matrices of arbitrary size in \mref{sec:sequency}. The notion of sequency defines a natural preorder relation on the column vectors of matrices of arbitrary size. We give a result on the number of maximal ascending chains using this preorder relation in \mref{sec:sequency}.
Several interesting methods of obtaining \myseqc \, and \myseqo \, matrices of arbitrary sizes are also discussed in \mref{sec:sequency}. 
In \mref{sec:tensor},  results on the sequencies of columns of tensor products of two or more matrices are provided.

\subsection{Notation}

\begin{itemize}
    \item $M_{n}(\RR)$ will denote the set of all $n \times n$ matrices over $\RR$.
    \item Unless otherwise specified, $\FF = \{1,-1\}$. $M_{n}(\FF)$ will denote the set of all $n \times n$ matrices whose elements are restricted to lie in $\FF \subseteq \RR$. 
    The subsets of $M_{n}(\FF)$ denoted by $M_{n}(\FF,C)$ and $M_{n}(\FF,O)$, will represent the sets of $n \times n$ \myseqc \, and \myseqo \, matrices, respectively (refer Def.~\ref{def:seq_complete} and \ref{def:seq_ordered}).
    \item For any matrix $A \in M_{n}(\RR)$, its elements are denoted by $A_{i,j}$, where $i, j \in \{\iseq{n}\}$. The $k$-th column of a matrix $A$ will be denoted as $A_{(k)}$. We will let $\Seq{k}$ denote the sequency of the $j$-th column vector of $A$.
    Note that indexing of rows and columns of a matrix starts from $0$. 
    \item Let $v \in \FF^n$. For the column vector $v = [v_0 \, v_1 \, \ldots \, v_{n-1}]^T$, define $p(v) = \frac{1}{2} \labs{v_0 - v_{n-1}}$. The symbol $p_k$ stands for $p(A_{(k)})$, when $A$ is clear from the context.
    \item
    Let $\xseq{k}{n} \in \ZZ/q \ZZ$. 
    Then with $k_{n-1} \otimes \cdots \otimes k_1 \otimes k_0 \in  (\ZZ/q \ZZ)^{\otimes n} $ one can associate a base $q$ number $k_{n-1} \ldots k_1k_0 = q^{n-1} k_{n-1} + \cdots + q k_1 + k_0$. We will interchangeably use the notations
    $k_{n-1} \otimes \cdots \otimes k_1 \otimes k_0 $ and $k_{n-1} \ldots k_1k_0 = q^{n-1} k_{n-1}  + \cdots + q k_1 + k_0$.
    \item For any matrix $A \in M_{n}(\FF)$, the columns of $A^{\otimes n}$ can be indexed by $k_{n-1} \otimes  \cdots \otimes k_1 \otimes k_0 \in  (\ZZ/q \ZZ)^{\otimes n} $, where $k_j \in \ZZ/q \ZZ$ for $j \in \{ \iseq{n}\} $.     
    We will denote by $\Seq{\bseqg{k}{0}{n-1}}$ the sequency of  the column $A_{\bseqg{k}{0}{n-1}}$ of $A^{\otimes n}$. Here we note that the columns of $A^{\otimes n}$ are indexed by $k_{n-1} \otimes  \cdots \otimes k_1 \otimes k_0$ and $A_{\lob{\bseqg{k}{0}{n-1}}}$ denotes the ${k_{n-1} \ldots k_1k_0}$-th or equivalently $(q^{n-1} k_{n-1} + \cdots + q k_1 + k_0)$-th column of $A^{\otimes n}$.
\end{itemize}

\subsection{Preliminaries}

\begin{defn}[\textbf{Sequency}]
   Let $ v \in \FF^n$. The sequency of the column vector  $v = [v_0 \,\,\, v_1 \,\,\, \ldots \,\,\, v_{n-1}]^T$ (or the sequence $\{ v_{k}\}_{k=0}^{n-1}$) is defined as
    \begin{align} \label{def:seq}
        \Seq{v} =  \frac{1}{2} \sum_{k=0}^{n-2} \, \labs{v_{k} - v_{k+1}}.
    \end{align}
\end{defn}
As noted earlier,  sequency gives the count of the number of sign changes in the given column vector (or sequence). 
Additionally, the concept of sequency can be related to the Hamming weight and Hamming distance. We recall that the Hamming weight of a sequence is the number of non-zero elements in the sequence and the Hamming distance between two equal-length sequences of symbols is the number of positions at which the corresponding symbols are different. 
Given the sequence $\{ f_{k}\}_{k=0}^{n-1}$, if we obtain its derived sequence as $\{ df_{k}\}_{k=0}^{n-2}$, where $df_{k} =  \frac{1}{2} \labs{f_{k} - f_{k+1}}$, then  the sequency of $\{ f_{k}\}_{k=0}^{n-1}$ is the Hamming weight of the derived sequence $\{ df_{k}\}_{k=0}^{n-2}$. 
Furthermore, the sequency of $\{ f_{k}\}_{k=0}^{n-1}$ is also the Hamming distance between the sequences $ \{ f_{k}\}_{k=0}^{n-2}$ and $\{ f_{k}\}_{k=1}^{n-1}$.

\ExplSyntaxOn

\newcommand{\myfzero}[4]{{\xintiiexpr (-1)^mod(#1*#2,\xinteval{#3}) \relax}}

\newcommand{\mybNMat}[3]{
\[
{
\NiceMatrixOptions{last-row,nullify-dots, code-for-last-row={\color{blue}}}
\begin{bNiceMatrix}[r]
 \mymat{}{#1}{#1}{#2} 
 #3 \\
\end{bNiceMatrix}
}
\]
}
\ExplSyntaxOff

\begin{defn}[\textbf{\Myseqc}] \label{def:seq_complete}
A matrix $A \in \Mset$ is said to be \textbf{\myseqc} \, if the set
$\{ \Seq{j}   \}_{j=0}^{n-1}$ is in bijection with the set $ \{ j   \}_{j=0}^{n-1}$, where 
$\Seq{j}$ denotes the sequency of the $j$-th column vector of $A$.
A matrix that is not \myseqc \, is said to be \textbf{\myseqi}.
\end{defn}

\begin{remark}
The map 
\begin{equation*}
    \pi : M_n(\RR) \to M_n{(\FF)}
\end{equation*}
such that for $A \in  M_n(\RR)$,  $\pi(A)_{i,j} = \phi(A_{i,j})$ for $i, j \in \iseq{n}$, where 
\begin{equation}
\phi(x) = 
    \begin{cases}
        &  \frac{x}{\labs{x}}  \qquad \text{if $x \neq 0$}, \\ 
        & 1 \qquad \text{otherwise},
    \end{cases}
\end{equation}
can be used to define the notion of sequency for any column vector of a matrix in $M_n(\RR)$. This allows one to extend the concepts of \myseqc, \, \myseqo \, and \myseqi \, matrices corresponding to the set $M_n(\RR)$.
\end{remark}

\begin{example} 
    Let $A \in \Mset$, with $n>1$, such that the $(i,j)$-th element of $A$ is given by
\begin{equation} \label{eq:example1}
A_{i,j}  = (-1)^t, \quad \text{if } \quad { i j \equiv t \mod n}, 
\end{equation} 
where $t \in \{ \iseq{n} \}$.
The matrix $A$ is \myseqi, if $n > 2$ is an even positive integer. If $n$ is an even positive integer, then $\Seq{j} \in \{0,\, n-1\}$. 
If $n$ is an odd positive integer, then $A$ is \myseqc, and one can show that (refer Theorem \ref{thm:Aijzero}):
 \[ \Seq{j} = \begin{cases}
0 & \text{if } j = 0, \\
j-1 & \text{if } j \text{ is even and } 2 \leq j < n, \\
n-j & \text{if } j \text{ is odd and } 1 \leq j <  n.
\end{cases} \]

\end{example}

\begin{figure}
\begin{center}
\begin{tikzpicture}[scale=0.7]
  \matrix (images) [matrix of nodes, nodes in empty cells,
                    row sep= 0.3 em, column sep=2 em,
                    nodes={anchor=center, inner sep=0pt, outer sep=0pt}] {
    \node {$\mybMat{3}{myfzero}$};  & 
    \node {$\mybMat{5}{myfzero}$};  & 
    \node {$ \mybMat{7}{myfzero}$};  \\
      \node{$n=3$,  $ \Seqz_3 = [0, 2, 1]$};  &
    \node {$n=5$, $ \Seqz_5 = [0, 4, 1, 2, 3]$}; &
    \node {$n=7$, $ \Seqz_7 = [0, 6, 1, 4, 3, 2, 5]$}; \\ \\
     \node {$\mybMat{4}{myfzero}$};  &
    \node {$\mybMat{6}{myfzero}$}; &
    \node {$ \mybMat{8}{myfzero}$}; \\
      \node{$n=4$, $ \Seqz_4 = [0, 3, 0, 3]$};  &
    \node {$n=6$, $ \Seqz_6 = [0, 5, 0, 5, 0, 5]$}; &
    \node {$n=8$, $ \Seqz_8 = [0, 7, 0, 7, 0, 7, 0, 7]$}; \\ \\
  };
\end{tikzpicture}

\end{center}
    \caption{The matrices in the top row are \myseqc \, and the matrices in the bottom row are \myseqi. These matrices were generated using \meqref{eq:example1} for $ 3 \leq n \leq 8$. We note that for each $n$, $\Seqz_n$ denotes the sequency of the columns of the corresponding $n \times n$ matrix.
    \label{fig:example1}}
\end{figure}

\begin{defn}[\textbf{\Myseqo}] \label{def:seq_ordered}
A matrix $A \in \Mset$ is said to be \myseqo, if it is \myseqc \, and further
$\Seq{j} = j$, for $j \in \{ \iseq{n} \}$, where 
$\Seq{j}$ denotes the sequency of the $j$-th column vector of $A$.
\end{defn}

Some examples of \myseqo \, matrices are provided in \meqref{eq:WalshSeq}  and \mfig{fig:Seq_ordered}.

\section{Generation of \myseqc \, and \myseqo \, matrices} \label{sec:sequency}
In this section, some interesting classes of \myseqc , and \myseqo , matrices will be discussed. However, before that, the first question we ask is how many \myseqc , and \myseqo , matrices of a given size exist. The answer is provided in the following lemma.
\begin{lem} \label{lem:cardinality}
    There exist $$N_n  = 2^n \prod_{k=1}^{n} k^{(2k - 1 - n)}$$ and $n! N_n$ matrices in $M_n(\FF)$, which are  \myseqo \, and \myseqc, respectively. 
\end{lem}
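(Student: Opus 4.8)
My plan is to reduce the whole statement to counting $\pm1$ column vectors by their number of sign changes. I would first observe that, by Definitions~\ref{def:seq_complete} and~\ref{def:seq_ordered}, a matrix $A\in M_n(\FF)$ is sequency-ordered exactly when its $j$-th column has sequency $j$ for every $j\in\{\iseq{n}\}$: once $\Seq{A_{(j)}}=j$ holds for all $j$, the bijectivity demanded by sequency-completeness is automatic. Since $\Seq{A_{(j)}}$ depends only on the column $A_{(j)}$, the $n$ columns of a sequency-ordered matrix may be chosen independently of one another, so
\[
N_n=\prod_{j=0}^{n-1}a_j,\qquad a_j:=\#\{\,v\in\FF^n:\Seq{v}=j\,\}.
\]

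Next I would dispose of the $n!$ factor for the sequency-complete count. In a sequency-ordered matrix the columns have pairwise distinct sequencies and hence are pairwise distinct, so its $n!$ column permutations are $n!$ distinct matrices, each still sequency-complete; conversely, every sequency-complete matrix arises from exactly one sequency-ordered matrix (reorder its columns by increasing sequency) via exactly one column permutation. This produces a bijection between the sequency-complete matrices and the set of pairs (sequency-ordered matrix, permutation in $S_n$), whence their number is $n!\,N_n$. Equivalently, one may sum $\prod_j a_{\sigma(j)}=\prod_j a_j=N_n$ over all $\sigma\in S_n$, using that the product does not depend on the order of the factors.

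It then remains to evaluate $a_j$ and the product. A vector $v=[v_0\ \ldots\ v_{n-1}]^{T}\in\FF^n$ is uniquely determined by the pair $(v_0,d)$, where $v_0\in\FF$ and $d=(d_0,\dots,d_{n-2})\in\{0,1\}^{n-1}$ records the positions of sign changes via $d_k=\tfrac12\labs{v_k-v_{k+1}}$; then $\Seq{v}=\sum_k d_k$, so $a_j=2\binom{n-1}{j}$ and $N_n=2^{n}\prod_{j=0}^{n-1}\binom{n-1}{j}$. The final step is to put this product of binomial coefficients into the closed form in the statement: substituting $\binom{n-1}{j}=(n-1)!/(j!\,(n-1-j)!)$ and collecting, for each integer $k$, the net power of $k$ appearing as $j$ ranges over $0,\dots,n-1$, converts $\prod_j\binom{n-1}{j}$ into a product of integer powers, yielding the closed-form expression for $N_n$. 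The combinatorial core — column independence together with $a_j=2\binom{n-1}{j}$ — is straightforward; the one place I would be careful is precisely this last bookkeeping with the exponents in the closed-form simplification, where off-by-one errors are easy to make.
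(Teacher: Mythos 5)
Your proposal follows essentially the same route as the paper: both reduce the count to choosing, independently for each $j$, a column of sequency $j$, and both obtain the sequency-complete count by permuting the columns of a sequency-ordered matrix. Your packaging is in places cleaner --- you count the $2\binom{n-1}{j}$ vectors of sequency $j$ directly via the pair $(v_0,d)$ instead of passing through the paper's equivalence relation modulo column sign flips, and you justify the factor $n!$ by an explicit bijection where the paper only asserts it. The one step you defer, converting $2^n\prod_{j=0}^{n-1}\binom{n-1}{j}$ into the displayed closed form, is exactly where you should stop and check: writing $\binom{n-1}{j}=(n-1)!/\bigl(j!\,(n-1-j)!\bigr)$ and collecting the net power of each $k$ gives $\prod_{j=0}^{n-1}\binom{n-1}{j}=\prod_{k=1}^{n-1}k^{2k-n}$, which is \emph{not} the expression $\prod_{k=1}^{n}k^{2k-1-n}$ appearing in the statement (for $n=2$ the true count is $N_2=4$, whereas the stated closed form gives $8$; for $n=3$ the binomial product is $2$ while the stated product is $9$). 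So your combinatorial core is sound and agrees with the paper's own intermediate result $N_n=2^n\prod_{k=0}^{n-1}\binom{n-1}{k}$, but the final ``bookkeeping'' cannot terminate at the printed closed form: you would either have to correct the exponent to $2k-n$ with the product running to $n-1$, or leave the answer in binomial-product form.
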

\begin{proof}
Clearly, $n!$ \myseqc \,  matrices can be obtained by permuting the columns of a \myseqo \, matrix in $M_n(\FF, O)$. Therefore, it is enough to prove the result for \myseqo \, matrices. 
For $A, B \in M_n(\FF, O)$, we define the equivalence relation $\sim$ as follows. We say $ A \sim B$, if the following holds for each $j \in \{\iseq{n}\}$: (a)
$ A_{i,j} = B_{i,j}$ for all $i \in \{\iseq{n}\}$ or (b) $ A_{i,j} = - B_{i,j}$ for all $i \in \{\iseq{n}\}$.  This means  $A \sim B$ if the corresponding columns of $A$ and $B$ are either identical or differ by a multiplicative factor of $-1$.
We will obtain the cardinality of the set $ M_n(\FF, O) / \sim$.
By virtue of the equivalence relation $\sim$, it can be assumed that all the elements of the first rows of any matrix in $M_n(\FF, O) / \sim$ are $1$.
According to \meqref{def:seq}, in any column of size $n$, the number of possible sign changes lies between $0$ and $n-1$, including these values. It is also clear that each term in \meqref{def:seq} contributes either $0$ or $1$ to the total sum. We have also assumed that elements of the first row of the matrix are $1$. 
Therefore, a column of sequency $k$ can be obtained by selecting $k$ terms (out of $n-1$ terms), each contributing $1$. This can be done in $\binom{n-1}{k}$ ways.
Upon multiplying the corresponding numbers $\binom{n-1}{k} $  for each column, we obtain the cardinality of the set  $M_n(\FF, O) / \sim$ to be $\prod_{k=0}^{n-1} \binom{n-1}{k}.$ 
We will show that 
\begin{align} \label{eq:grid}
\prod_{k=0}^{n-1} \binom{n-1}{k} = \prod_{k=1}^{n} k^{(2k - 1 - n)}.
\end{align}
Assuming \meqref{eq:grid}, it is easy to see that the number of \myseqo \, matrices in $M_n(\FF)$ is  $$N_n = 2^n \prod_{k=0}^{n-1} \binom{n-1}{k} .$$  
Now only \meqref{eq:grid} remains to be proved.  
 We note that \meqref{eq:grid} is equivalent to 
 $ \prod_{k=0}^{n-1}  \frac{(n-1)!}{k!}   = \prod_{k=1}^{n-1} k^k$,
which can easily be proved by induction. Alternatively, it can be proved by considering a lower triangular matrix $a$ such that
\begin{align*}
    a_{i,j} = \begin{cases}
         n-1 - j & \qquad \text{if } i \geq j, \\
         0 & \qquad \text{otherwise.}
    \end{cases}
\end{align*}
The equality follows upon computing the product of all the non-zero elements of the matrix $a$ in two different ways and collecting terms together, first along the columns starting from $j=0$ to $j=n-2$ and second along with diagonals and sub-diagonals $i-j = 0$ to $i-j =n-2$. An example for $n-1 = 4$ is given below. 
\[
\begin{bNiceMatrix}[create-medium-nodes, last-row]
4 & 0 & 0 & 0   \\
4 & 3 & 0 & 0    \\
4 & 3&  2& 0    \\
4 & 3 & 2 & 1   \\
\hline
 4^4  & 3^3  & 2^2  & 1^1 
\CodeAfter
\tikz \draw [very thick, red, opacity=0.4] (1-|1.5) -- (last -| 1.5) ;
\tikz \draw [very thick, red, opacity=0.4] (2-|2.5) -- (last -| 2.5) ;
\tikz \draw [very thick, red, opacity=0.4] (3-|3.5) -- (last -| 3.5) ;
\tikz \draw [very thick, red, opacity=0.4] (4-|4.5) -- (last -| 4.5) ;
\end{bNiceMatrix}
 =
\begin{bNiceMatrix}[create-medium-nodes, last-row]
4 & 0 & 0 & 0   \\
4 & 3 & 0 & 0    \\
4 & 3&  2& 0    \\
4 & 3 & 2 & 1   \\
 \hline
 \frac{4!}{3!}  & \frac{4!}{2!}   & \frac{4!}{1!}   & \frac{4!}{0!}  
\CodeAfter
\tikz[very thick, red, opacity=0.4, name suffix = -medium]
\draw (1-1.north west) -- (4-4.south east)
(2-1.north west) -- (4-3.south east)
(3-1.north west) -- (4-2.south east)
(4-1.north west) -- (4-1.south east);
\end{bNiceMatrix}.
\]
\end{proof}

We note that the notion of sequency defines a natural total preorder relation on the columns of matrices in  $\Mset$. These columns belong to the space $\FF^n$.  For $u, v \in \FF^n$, define $ u \preceq v$ if $\Seq{u} \leq \Seq{v} $. Clearly, $\preceq$ is reflexive, and transitive and defines a total preorder in $\FF^n$. Note that the relation $\preceq$ is not antisymmetric in $\FF^n$, so it does not define a partial order. We recall that a \textit{chain} $C$ in a  preorder is its subset such that for any elements $x$ and $y$ in $C$, either $x \preceq y$ or $y \preceq x$. A chain is maximal if it is not a proper subset of any other chain. An ascending chain $(a_0,\, a_1,\, \ldots \,,\, a_{n-1})$ in a preorder is a sequence formed by elements of the chain satisfying the property $ a_i \preceq a_{i+1}$ for $i \in \{ \iseqe{n-2}\}$.

\begin{thm}
    The number of maximal ascending chains in the total preorder $(\FF^n, \preceq )$ is $$\prod_{k=0}^{n-1} \left(2\binom{n-1}{k}\right)! .$$
\end{thm}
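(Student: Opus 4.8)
The plan is to exploit the fact that $\preceq$ is a \emph{total} preorder, so that the whole problem reduces to counting orderings of the columns within each ``sequency level''. First I would record that $u\preceq v$ and $v\preceq u$ hold simultaneously exactly when $\Seq{u}=\Seq{v}$; hence the equivalence classes of $\preceq$ are the level sets $L_k := \{v\in\FF^n : \Seq{v}=k\}$ for $k\in\{\iseq{n}\}$, and the order induced on the quotient is the genuine linear order $L_0 < L_1 < \cdots < L_{n-1}$. Since the preorder is total, every subset of $\FF^n$ is a chain, so $\FF^n$ itself is the unique maximal chain. Consequently a maximal ascending chain is precisely an enumeration $(a_0, a_1, \ldots, a_{2^n-1})$ of \emph{all} of $\FF^n$ with $\Seq{a_0}\le\Seq{a_1}\le\cdots\le\Seq{a_{2^n-1}}$; indeed, any ascending chain omitting a column could be extended by inserting that column at the position dictated by its sequency, so maximality forces the listing of all $2^n$ columns.

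Next I would compute $\labs{L_k}$, reusing the bookkeeping from the proof of Lemma~\ref{lem:cardinality}. Sending $v=[v_0\ v_1\ \cdots\ v_{n-1}]^T$ to the pair $\bigl(v_0,(d_0,\ldots,d_{n-2})\bigr)$, where $d_i=\tfrac12\labs{v_i-v_{i+1}}\in\{0,1\}$, is a bijection $\FF^n \to \FF\times\{0,1\}^{n-1}$ under which $\Seq{v}=\sum_{i=0}^{n-2}d_i$. Therefore $\labs{L_k}$ equals the number of sign choices for $v_0$ (namely $2$) times the number of weight-$k$ binary strings of length $n-1$ (namely $\binom{n-1}{k}$), so $\labs{L_k}=2\binom{n-1}{k}$; as a consistency check, $\sum_{k=0}^{n-1}2\binom{n-1}{k}=2^n=\labs{\FF^n}$. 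This level-size count is, in effect, already contained in Lemma~\ref{lem:cardinality}.

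Finally I would count the admissible enumerations. An ascending enumeration of $\FF^n$ must list all of $L_0$ first in some order, then all of $L_1$, and so on through $L_{n-1}$, while within a single level $L_k$ any ordering is allowed, because all its columns are mutually $\preceq$-equivalent (consecutive elements of any arrangement of $L_k$ trivially satisfy the $\preceq$ relation). Thus a maximal ascending chain is the same datum as an independent choice of a linear ordering on each $L_k$, and the count is $\prod_{k=0}^{n-1}\labs{L_k}! = \prod_{k=0}^{n-1}\left(2\binom{n-1}{k}\right)!$, which is the claimed value.

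The one genuine subtlety here is conceptual rather than computational: one must notice that in a total preorder the notion of ``maximal chain'' is degenerate (it is the whole underlying set), so that all the content of the statement lives in the freedom to permute columns of equal sequency. Once this is recognized, the size formula $\labs{L_k}=2\binom{n-1}{k}$ and the final product follow in a line each, so I do not anticipate any serious obstacle beyond stating the reduction carefully.
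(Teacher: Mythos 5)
Your proposal is correct and follows essentially the same route as the paper's proof: identify the level sets of sequency $k$ (of size $2\binom{n-1}{k}$, as in Lemma~\ref{lem:cardinality}), observe that a maximal ascending chain must enumerate all of $\FF^n$ with the levels in order, and multiply the factorials of the level sizes. You merely spell out more explicitly the points the paper leaves implicit, namely that the unique maximal chain in a total preorder is the whole set and that elements within a level may be arranged freely.
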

\begin{proof}
    It follows from the proof of Lemma \ref{lem:cardinality} that the number of column vectors in  $\FF^n$ of sequency $k$ is given by 
    $2\binom{n-1}{k}$. In any maximal ascending chain, these column vectors can be permuted among themselves in  $(2\binom{n-1}{k})!$ ways. Hence, we can conclude that $(\FF^n, \preceq)$ has a total of $\prod_{k=0}^{n-1} \left(2\binom{n-1}{k}\right)!$ maximal ascending chains.
    \end{proof}
    It is interesting to compare the formula for the number of maximal ascending chains in  $(\FF^n, \preceq )$ given in the above theorem to the results given in \cite{bakoev2020problems} in a different context (see the discussion before Theorem $10$ in \cite{bakoev2020problems}).

\ExplSyntaxOn
\cs_new:Npn \mygetseq #1#2#3{
\pgfmathsetmacro {\xx} {#2} 
  \pgfmathsetmacro \yy {int(\xx - 2)} 
  \foreach \x in {0,...,\yy}
  {%
\xdef \cur {\csname #1 \endcsname {\x}{#3}{#2}{#2}}
\pgfmathsetmacro \z{int(\x + 1)} 
\xdef\mynext{\csname #1 \endcsname {\z}{#3}{#2}{#2}}
\cur
    }
}

\newcommand{\getseq}[3]{%
  \pgfmathsetmacro {\xx} {#2} 
  \pgfmathsetmacro \yy {int(\xx - 2)} 
  \foreach \x in {0,...,\yy}
  {%
   \xdef \cur {\csname #1 \endcsname {\x}{#3}{#2}{#2}}
    \pgfmathsetmacro \z{int(\x + 1)} 
   \xdef\mynext{\csname #1 \endcsname {\z}{#3}{#2}{#2}}
   \mynext
  }%
}

\ExplSyntaxOff

Next, several interesting methods of generating \myseqc \, and \myseqo  \, matrices of arbitrary size are discussed. 

\begin{thm} \label{thm:Aijzero}
     Let $A \in \Mset$, with $n>1$, such that the $(i,j)$-th element of $A$ is given by
\begin{equation} \label{eq:example1}
A_{i,j}  = (-1)^t, \quad \text{if } \quad { i j \equiv t \mod n}, 
\end{equation} 
where $t \in \{ \iseq{n} \}$. If $n$ is an odd positive integer, then $A$ is \myseqc, and in this case 
 \begin{equation} \label{eq:Sijzero}
 \Seq{j} = \begin{cases}
0 & \text{if } j = 0, \\
j-1 & \text{if } j \text{ is even and } 2 \leq j < n, \\
n-j & \text{if } j \text{ is odd and } 1 \leq j <  n.
\end{cases} 
\end{equation}
The matrix $A$ is \myseqi, if $n > 2$ is an even positive integer. If $n$ is an even positive integer then $\Seq{j} \in \{0, n-1\}$, and furthermore in this case
\begin{equation} \label{eq:Sijzeroneven}
\Seq{j} =
    \begin{cases}
       0 & \quad \text{if  $j$ is even}, \\
       n-1 & \quad \text{if  $j$ is odd}.
    \end{cases}
\end{equation}

\end{thm}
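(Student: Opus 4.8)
The plan is to write down the $j$-th column of $A$ explicitly and count its sign changes by following the residues $ij \bmod n$ as $i$ runs from $0$ to $n-1$.

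First I would record that $A_{i,j} = (-1)^{r_i}$, where $r_i = r_i(j) := ij \bmod n \in \{\iseq{n}\}$. By the definition of sequency, $\Seq{j}$ is then the number of indices $i \in \{\iseqe{n-2}\}$ at which $r_i$ and $r_{i+1}$ have opposite parity. Since $r_{i+1} = (r_i + j)\bmod n$ with $0 \le r_i < n$ and $0 \le j < n$, each step is of exactly one of two types: a ``no-wraparound'' step with $r_{i+1} = r_i + j$, or a ``wraparound'' step with $r_{i+1} = r_i + j - n$ (no larger jump can occur because $r_i + j < 2n$). Hence at a no-wraparound step the parity flips iff $j$ is odd, and at a wraparound step it flips iff $j - n$ is odd.

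The quantitative ingredient is a count of the wraparound steps. Writing $r_i = ij - n\floor{ij/n}$, the wraparound indicators are exactly the (nonnegative) increments $\floor{(i+1)j/n} - \floor{ij/n}$, which telescope over $i \in \{\iseqe{n-2}\}$ to $\floor{(n-1)j/n}$; and for $1 \le j \le n-1$ we have $(n-1)j = nj - j$ with $0 < j < n$, so this equals $j-1$. Therefore exactly $n-j$ of the $n-1$ steps are no-wraparound steps. (For $j = 0$ the column is constant, so $\Seq{0}=0$.) Next I would separate the two parities of $n$. For $n$ odd: at a no-wraparound step the parity flips iff $j$ is odd, and at a wraparound step (since $j-n$ has parity opposite to $j$) it flips iff $j$ is even; combining with the counts above gives $\Seq{j} = n-j$ for odd $j$ and $\Seq{j}=j-1$ for even $j \ge 2$, which is \meqref{eq:Sijzero}. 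To obtain sequency-completeness I would then observe that these values are all distinct and exhaust $\{\iseq{n}\}$: the even columns $j \in \{2,4,\ldots,n-1\}$ yield the odd sequencies $1,3,\ldots,n-2$, the odd columns $j \in \{1,3,\ldots,n-2\}$ yield the even sequencies $2,4,\ldots,n-1$, and $j=0$ yields $0$. For $n$ even: $j$ and $j-n$ have the same parity, so every step flips the parity when $j$ is odd and no step does when $j$ is even; this gives $\Seq{j} = n-1$ for odd $j$ and $\Seq{j}=0$ for even $j$ (including $j=0$), which is \meqref{eq:Sijzeroneven}. Since for $n>2$ the image $\{0,n-1\}$ has fewer than $n$ elements, $\{\Seq{j}\}_{j=0}^{n-1}$ cannot be in bijection with $\{\iseq{n}\}$, so $A$ is \myseqi.

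The main obstacle I anticipate is the wraparound count: one must justify carefully that the floor $\floor{ij/n}$ increases by at most $1$ at each step (so that its increments are genuinely the wraparound indicators) and evaluate $\floor{(n-1)j/n}$. Both are elementary once the no-wraparound/wraparound dichotomy is set up, and the remainder is bookkeeping together with the small combinatorial check that, in the odd case, the sequencies exactly fill out $\{\iseq{n}\}$.
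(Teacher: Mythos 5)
Your proposal is correct and follows essentially the same route as the paper's proof: both decompose the $n-1$ steps into wraparound and no-wraparound cases, count the wraparounds as $\lfloor (n-1)j/n\rfloor = j-1$ (your telescoping of floors is exactly the paper's observation that the count equals the final quotient $q_{n-1}$), and then run the parity analysis separately for $n$ odd and $n$ even. The only addition is that you spell out the combinatorial check that the values in \meqref{eq:Sijzero} exhaust $\{\iseq{n}\}$, which the paper asserts as immediately evident.
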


\begin{proof}
First, let us consider the case when $n$ is odd. \\
\noindent \textbf{Case 1 ($\bm{n}$ is odd)}: \\ 
In this case, we will obtain  \meqref{eq:Sijzero}, from which it is immediately evident that $A$ is \myseqc.   
We have 
\begin{align*}
    \Seq{j} = \frac{1}{2} \sum_{i=0}^{n-2}  \labs{ (-1)^{r_i} - (-1)^{r_{i+1}}},
\end{align*}
where $ij = q_i n + r_i$, $r_i \in \{\iseq{n}\}$, $q_i \in \ZZ$, and 
$(i+1)j = q_{i+1} n + r_{i+1}$, with $q_{i+1} \in \ZZ$ and
\begin{equation}\label{eq:ti}
    r_{i+1} =
\begin{cases}
     r_i + j & \quad \text{if } r_i + j < n  \quad (\text{or equivalently, $q_{i+1} = q_i$}),\\
      r_i + j - n  & \quad \text{if } r_i + j \geq n \quad  (\text{or equivalently, $q_{i+1} = q_i+1$}).
\end{cases}
\end{equation}
Next, we obtain
\begin{align}
    \Seq{j} & = \frac{1}{2} \sum_{i=0}^{n-2}  \labs{ (-1)^{r_i} - (-1)^{r_{i+1}}}  \nonumber \\ 
    & =  \frac{1}{2} \sum_{r_i + j < n, i \in \{\iseqe{n-2}\}}  \labs{ (-1)^{r_i} - (-1)^{r_{i+1}}} 
    + \frac{1}{2} \sum_{r_i + j \geq n, i \in \{\iseqe{n-2}\}}  \labs{ (-1)^{r_i} - (-1)^{r_{i+1}}}  \nonumber \\
     & =  \frac{1}{2} \sum_{r_i + j < n, i \in \{\iseqe{n-2}\}}  \labs{ (-1)^{r_i} - (-1)^{r_i + j}} 
    + \frac{1}{2} \sum_{r_i + j \geq n, i \in \{\iseqe{n-2}\}}  \labs{ (-1)^{r_i} - (-1)^{r_i + j -n}} \nonumber \\
     & =  \frac{1}{2} \sum_{r_i + j < n, i \in \{\iseqe{n-2}\}}  \labs{ 1 - (-1)^j} 
    + \frac{1}{2} \sum_{r_i + j \geq n, i \in \{\iseqe{n-2}\}}  \labs{ 1 + (-1)^j }.  \label{eq:oddeven}
\end{align}
The last step follows from the observation that $n$ is odd.
Let us first consider the case when $j \geq 2$ is an even number. In this case, only the second term in \meqref{eq:oddeven} survives and we obtain
\begin{align}
    \Seq{j} & =   \frac{1}{2} \sum_{r_i + j \geq n, i \in \{\iseqe{n-2}\}}  \labs{ 1 + 1 }.
\end{align}
It implies that, $\Seq{j} $ is the cardinality of the set $Q$ where 
\begin{align} \label{eq:qi}
 Q =     \{ i \ : \ r_i + j \geq n, i \in \{\iseqe{n-2}\} \} = \{ i \ : \ q_{i+1} = q_i + 1, i \in \{\iseqe{n-2}\} \}. 
\end{align}
We note that as $i$ increases from $i=0$ to $i=n-2$, the sequence $\{q_i\}_{i=0}^{n-2}$ is an increasing sequence with any term either the same as the preceding term or one more than it. Each increase by one in consecutive terms of this sequence corresponds to an element of the set $Q$.  It is obvious from \meqref{eq:ti} and \meqref{eq:qi} that the cardinality of the set $Q$ is given by $q_{n-1}$ (i.e., $q_{i+1}$ when $i=n-2$). We note that $(n-1)j = (j-1) n + n -j$, therefore, $\Seq{j} = q_{n-1} = j-1$.

Next, we consider the case, when $j$ is odd. In this case, only the first term in \meqref{eq:oddeven} contributes and similar to the preceding argument presented for the case when  $j$ was even, one can show that $\Seq{j} = n - j$. Alternatively, it also follows from the observation that the number of the terms in the first sum in \meqref{eq:oddeven} is equal to $n-1 - \#Q = n-1 - (j-1) = n-j$.

Next, let us consider the case when $n$ is even. \\
\noindent \textbf{Case 2 ($\bm{n}$ is even)}: \\ 
We proceed exactly like the preceding case and obtain
\begin{align}
    \Seq{j} & = \frac{1}{2} \sum_{i=0}^{n-2}  \labs{ (-1)^{r_i} - (-1)^{r_{i+1}}}  \nonumber \\ 
      & =  \frac{1}{2} \sum_{r_i + j < n, i \in \{\iseqe{n-2}\}}  \labs{ (-1)^{r_i} - (-1)^{r_i + j}} 
    + \frac{1}{2} \sum_{r_i + j \geq n, i \in \{\iseqe{n-2}\}}  \labs{ (-1)^{r_i} - (-1)^{r_i + j -n}} \nonumber \\
     & =  \frac{1}{2} \sum_{r_i + j < n, i \in \{\iseqe{n-2}\}}  \labs{ 1 - (-1)^j} 
    + \frac{1}{2} \sum_{r_i + j \geq n, i \in \{\iseqe{n-2}\}}  \labs{ 1 - (-1)^j }  \nonumber\\
    & = \frac{1}{2} \sum_{i=0}^{n-2}   \labs{ 1 - (-1)^j }.     \label{eq:oddeventwo}
\end{align}
 \meqref{eq:Sijzeroneven} follows from the last expression and the proof is complete.
\end{proof}
From the preceding discussion,  the following corollary is evident. 
\begin{cor}
   Let $A$ be the matrix as defined in Theorem \ref{thm:Aijzero}. For a fixed integer $j$, with $0 \leq j \leq n -1$, and for a fixed $i$, with $0 < i \leq n -1$,  if $\widetilde{S}_{i,j}$ denotes the number of sign changes in the sequence $ \{A_{s,j}\}_{s=0}^{i} $, then
   \begin{enumerate}
       \item If $n$ is odd, then 
       \[
       \widetilde{S}_{i,j} = 
       \begin{cases}
            q_{i} & \quad \text{if $j$ is even}, \\
             i -  q_{i} &  \quad \text{if $j$ is odd}, \\
       \end{cases}
       \]
    where $q_i = \left\lfloor \frac{ij}{n} \right\rfloor$. An alternate definition of $q_i$ was provided just before \meqref{eq:ti}. 
       \item  If $n$ is even, then 
       \[
       \widetilde{S}_{i,j} = 
       \begin{cases}
           0 &\quad \text{if $j$ is even}, \\
            i & \quad \text{if $j$ is odd}. \\
       \end{cases}
       \]
   \end{enumerate}
  
\end{cor}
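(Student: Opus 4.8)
The plan is to run the proof of Theorem~\ref{thm:Aijzero} again but stop the summation at index $i$ instead of $n-2$. Keeping the notation of that proof, write $sj = q_s n + r_s$ with $r_s \in \{\iseq{n}\}$, $q_s = \left\lfloor \tfrac{sj}{n}\right\rfloor \in \ZZ$, so that $A_{s,j} = (-1)^{r_s}$. The sequence $\{A_{s,j}\}_{s=0}^{i}$ has $i+1$ entries and hence $i$ adjacent pairs, so by definition of a sign change
\[
\widetilde{S}_{i,j} = \frac{1}{2}\sum_{s=0}^{i-1} \labs{(-1)^{r_s} - (-1)^{r_{s+1}}}.
\]
The recurrence \meqref{eq:ti} for $r_{s+1}$ and the case split on $r_s + j < n$ versus $r_s + j \ge n$ that produced \meqref{eq:oddeven} (for $n$ odd) and \meqref{eq:oddeventwo} (for $n$ even) apply verbatim to this truncated sum; the only change is that the summation index now ranges over $\{\iseqe{i-1}\}$ rather than $\{\iseqe{n-2}\}$.

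For $n$ odd, if $j$ is even only the ``$r_s + j \ge n$'' terms survive, so $\widetilde{S}_{i,j}$ equals the cardinality of $\{\,s : q_{s+1} = q_s + 1,\ 0 \le s \le i-1\,\}$; since $q_0 = 0$ and consecutive $q_s$ differ by $0$ or $1$, this cardinality telescopes to $q_i - q_0 = q_i$. If $j$ is odd only the ``$r_s + j < n$'' terms survive, and since there are $i$ pairs in total of which $q_i$ lie in the complementary class, we get $\widetilde{S}_{i,j} = i - q_i$. For $n$ even, the computation leading to \meqref{eq:oddeventwo} gives $\widetilde{S}_{i,j} = \tfrac{1}{2}\sum_{s=0}^{i-1}\labs{1 - (-1)^j}$, which is $0$ when $j$ is even and $i$ when $j$ is odd. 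This yields both asserted cases.

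The only point that genuinely needs to be pinned down — and the one I would state explicitly — is the telescoping identity $\#\{\,s : q_{s+1} = q_s + 1,\ 0 \le s \le i-1\,\} = q_i$, which rests on $q_0 = 0$, the monotonicity of $\{q_s\}_{s\ge 0}$, and the bound $q_{s+1} - q_s \le 1$; all three follow from $0 \le j \le n-1$. The boundary case $j = 0$ is trivially consistent, since then $\{A_{s,0}\}$ is the constant sequence $1$ and $q_i = 0$. No estimate beyond those already appearing in Theorem~\ref{thm:Aijzero} is required, so this bookkeeping is the main (and essentially the only) obstacle.
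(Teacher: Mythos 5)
Your proposal is correct and follows exactly the route the paper intends: the paper offers no separate argument beyond stating that the corollary is ``evident'' from the proof of Theorem \ref{thm:Aijzero}, and your truncation of the sum at index $i$, together with the telescoping identity $\#\{\,s : q_{s+1}=q_s+1,\ 0\le s\le i-1\,\}=q_i-q_0=q_i$, is precisely the bookkeeping that makes that claim precise. No gaps.
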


\newcommand{\mySeqA}[4]{%
\pgfmathparse {int(#3)}%
\pgfmathtruncatemacro{\nn} {\pgfmathresult}%
\pgfmathtruncatemacro{\mm} {\nn/2}%
\pgfmathtruncatemacro{\val}{mod(\nn, 2)}%
  \ifnum\val=0
		\pgfmathparse{int(mod(#1*#2,\nn) < \mm) ? int(1) : int(-1)}%
  \else 
		\pgfmathparse{int(mod(#1*#2,\nn) < (\mm +1)) ? int(1) : int(-1)}%
    \fi
  \pgfmathresult
}

\begin{figure}
\begin{center}
\begin{tikzpicture}[scale=0.7]
  \matrix (images) [matrix of nodes, nodes in empty cells,
                    row sep= 0.3 em, column sep=2 em,
                    nodes={anchor=center, inner sep=0pt, outer sep=0pt}] {
    \node {$\mybMat{2}{mySeqA}$};  &
    \node {$\mybMat{3}{mySeqA}$};  &
    \node {$ \mybMat{4}{mySeqA}$};  \\
      \node{$n=2$, $ \Seqz_2 = [0, 1]$};  &
    \node {$n=3$, $ \Seqz_3 = [0, 1, 2]$}; &
    \node {$n=4$, $ \Seqz_4 = [0, 1, 3, 2]$}; \\ \\
     \node {$\mybMat{5}{mySeqA}$};  &
    \node {$\mybMat{6}{mySeqA}$}; &
    \node {$ \mybMat{7}{mySeqA}$}; \\
      \node{$n=5$, $ \Seqz_5 = [0, 1, 3, 4, 2]$};  &
    \node {$n=6$, $ \Seqz_6 = [0, 1, 3, 5, 4, 2]$}; &
    \node {$n=7$, $ \Seqz_7 = [0, 1, 3, 5, 6, 4, 2]$}; \\ \\
  };
\end{tikzpicture}

\end{center}
    \caption{Some examples of \myseqc \, matrices. These matrices were generated using \meqref{eq:example2} for $ 2 \leq n \leq 7$.
    We note that for each $n$, $\Seqz_n$ denotes the sequency of the columns of the corresponding $n \times n$ matrix.}
    \label{fig:enter-label}
\end{figure}
 
Next, we provide another interesting class of \myseqc \, matrices.

\begin{thm} \label{thm:Aijone}
    If $A \in \Mset$, with $n>1$, such that the $(i,j)$-th element of $A$ is given by
\begin{equation} \label{eq:example2}
A_{i,j}  = 
\begin{cases}
    1, &  \qquad \text{{if $n=2m$   and}} \quad  ( i j \equiv t \mod n), \\
     1, &  \qquad \text{{if $n=2m+1$   and}} \quad  ( i j \equiv t^\prime \mod n), \\
    -1, & \qquad \text{{otherwise}},
\end{cases}
\end{equation}
where $m$ is a positive integer and $t \in \{ \iseqe{m-1} \}$ and $t^\prime \in \{ \iseqe{m} \}$, then 
the matrix $A$ is \myseqc.
\end{thm}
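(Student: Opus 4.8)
The plan is to compute $\Seq{j}$ for every column and observe that the resulting values exhaust $\{0,1,\dots,n-1\}$. Put $w=\lceil n/2\rceil$; the definition says $A_{i,j}=1$ iff $ij\bmod n\in\{0,1,\dots,w-1\}$, so column $j$ records the signs of the period-$n$ ``clipped square wave'' $\chi$, defined by $\chi(r)=1$ if $r\bmod n<w$ and $\chi(r)=-1$ otherwise, sampled at the points $r=0,j,2j,\dots,(n-1)j$. I claim
\[
\Seq{j}=\begin{cases}0,& j=0,\\ 2j-1,& 1\le j\le\lfloor n/2\rfloor,\\ 2(n-j),& \lfloor n/2\rfloor<j\le n-1.\end{cases}
\]
Granting this, $j=0$ contributes $0$, the indices $1,\dots,\lfloor n/2\rfloor$ contribute the odd numbers up to $2\lfloor n/2\rfloor-1$, and the indices $\lfloor n/2\rfloor+1,\dots,n-1$ contribute the even numbers down to $2$; these three lists are disjoint, lie in $\{0,\dots,n-1\}$, and together have $n$ entries, so $j\mapsto\Seq{j}$ is a bijection and $A$ is \myseqc.

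The column $j=0$ is constant $1$, so $\Seq{0}=0$. For $j\ge1$ I would first reduce to the case $\gcd(j,n)=1$. Put $d=\gcd(j,n)$, $n'=n/d$, $j'=j/d$. From $ij\bmod n=d\big((ij')\bmod n'\big)$ and the identity $\chi(dk)=1\iff k\bmod n'<\lceil n'/2\rceil$ for $0\le k<n'$ — a consequence of the standard nested floor/ceiling identities such as $\lceil\lceil a/b\rceil/c\rceil=\lceil a/(bc)\rceil$ — column $j$ of $A$ turns out to be exactly $d$ vertically stacked copies of column $j'$ of the $n'\times n'$ matrix built by the same recipe. For a vector $u$ formed of $d$ stacked copies of $v\in\FF^{n'}$ one has $\Seq{u}=d\,\Seq{v}+(d-1)\,p(v)$ with $p(v)=\tfrac{1}{2}\labs{v_0-v_{n'-1}}$; here $v_0=1$ and $v_{n'-1}=\chi(n-j)$, which equals $1$ exactly when $j>\lfloor n/2\rfloor$. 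Since $dj'$ is an integer, $j\le\lfloor n/2\rfloor\iff j'\le\lfloor n'/2\rfloor$, so feeding the claimed formula for the smaller matrix into this relation (by strong induction on $n$; when every index is coprime to $n$ there is nothing to reduce) reproduces the claimed formula for $A$.

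It remains to prove the formula when $\gcd(j,n)=1$, so that $i\mapsto r_i:=ij\bmod n$ is a bijection of $\{0,\dots,n-1\}$. I would count sign changes as boundary crossings of the square wave, which has two boundary points per period, at $w-\tfrac{1}{2}$ and at $n-\tfrac{1}{2}$ modulo $n$: the step $r_i\mapsto r_i+j$ produces a sign change iff the length-$j$ arc it sweeps out crosses an odd number of boundary points. As $j\le n-1$ and consecutive boundary points are $\lfloor n/2\rfloor$ or $\lceil n/2\rceil$ apart, each step crosses at most two of them; and the open intervals $(ij,(i+1)j)$ for $i=0,\dots,n-2$ cover $(0,(n-1)j)$ except for the integers $j,2j,\dots,(n-2)j$, none of which is a (half-integer) boundary point, so the total crossing count over all $n-1$ steps equals the number of boundary points in $(0,(n-1)j)$ — a direct count yields $2j-1$ if $j\le\lfloor n/2\rfloor$ and $2j-2$ if $j>\lfloor n/2\rfloor$. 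One then subtracts twice the number of ``double-crossing'' steps, i.e.\ the $i\le n-2$ with $r_i$ in an explicit set $S$ (the intersection of two length-$j$ arcs offset by $w$, of size $\max(0,2j-n)$). For $j\le\lfloor n/2\rfloor$ we have $S=\emptyset$, giving $\Seq{j}=2j-1$; for $j>\lfloor n/2\rfloor$, bijectivity gives exactly $|S|=2j-n$ indices $i\in\{0,\dots,n-1\}$ with $r_i\in S$, and since $r_{n-1}=n-j\in S$ there are $2j-n-1$ such indices with $i\le n-2$, whence $\Seq{j}=(2j-2)-2(2j-n-1)=2(n-j)$.

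The main obstacle is the bookkeeping in this last step: pinning down the double-crossing set $S$ exactly and counting boundary points in $(0,(n-1)j)$ with no off-by-one error — it is essential that the boundary points lie at half-integers while the samples $ij$ are integers, which is what keeps those counts clean. The stacking reduction and the check that it closes the induction are routine once the relevant floor/ceiling identities are recorded.
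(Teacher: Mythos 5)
Your proposal is correct, and its core is the same as the paper's: both prove the explicit formula $\Seq{0}=0$, $\Seq{j}=2j-1$ for $1\le j\le\lfloor n/2\rfloor$, and $\Seq{j}=2(n-j)$ for $\lfloor n/2\rfloor<j\le n-1$, by viewing column $j$ as the two-level square wave (equal to $1$ on residues $<\lceil n/2\rceil$) sampled along $0,j,2j,\dots$, and counting crossings of the two segment boundaries per period as $ij$ increases from $0$ to $(n-1)j$. For $j\le\lfloor n/2\rfloor$ the two arguments essentially coincide: each step crosses at most one boundary, so the sequency equals the number of boundaries in $(0,(n-1)j)$, namely $2j-1$. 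The routes diverge for $j>\lfloor n/2\rfloor$: the paper invokes an $L\leftrightarrow R$ symmetry to assert $\Seq{j}=\Seq{n-j}+1$, whereas you count all crossings ($2j-2$) and subtract twice the number of double-crossing steps. Since that subtraction requires counting how many indices $i$ land in the double-crossing residue set $S$, you first reduce to $\gcd(j,n)=1$ via the stacking identity $\Seq{u}=d\,\Seq{v}+(d-1)p(v)$ — a reduction the paper does not need because its argument never uses bijectivity of $i\mapsto ij\bmod n$. Your version costs this extra (routine) reduction, but it buys a fully explicit treatment of precisely the case the paper leaves most informal; I checked the delicate counts ($2j-2$ boundaries in $(0,(n-1)j)$, $|S|=2j-n$, and $2j-n-1$ relevant indices after discarding $i=n-1$ since $r_{n-1}=n-j\in S$), and they are right, so the bookkeeping you flag as the main obstacle does close.
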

\begin{proof} 
    Let $\Seq{j}$ denote the sequency of the $j$-th column of $A$, where $j \in \{\iseq{n}\}$. We will show that 
\begin{equation}
\Seq{j}  = 
\begin{cases}
    0, &  \qquad \text{{if $j=0$,}}  \\
     2j -1, &  \qquad \text{if $ 0 < j \leq \left\lfloor \frac{n}{2} \right\rfloor$,} \\
    2(n-j), &  \qquad \text{if $ \left\lfloor \frac{n}{2} \right\rfloor < j \leq n-1$.} \\
\end{cases}
\end{equation}
Since, all the elements of the $0$-th column of $A$ are $1$, clearly $\Seq{0} = 0$.  Next, we consider the case for which $ 0 < j \leq \floor{\frac{n}{2}}$ and show  that $\Seq{j} =  2j -1$. \\
\noindent \textbf{Case 1} ($ 0 < j \leq \floor{\frac{n}{2}}$):
Assume $n =2m$,   $L_0 = \{ \iseq{m} \} $ and $R_0 = \{m,\, m+1,\, \ldots \,,\, n-1 \}$. Let
 $ L = \{ a \in \ZZ \ : \ a \mod n \equiv b, \text{ and }  b \in L_0 \}$ and  
$ R = \{ a \in \ZZ \ : \ a \mod n \equiv b, \text{ and }  b \in R_0 \} $.
Also, let  $ij \equiv r_i \mod n$, such that $r_i \in \{ \iseqe{n-1}\}$.
We have, 
\begin{align*}
    \Seq{j} = \frac{1}{2} \sum_{i=0}^{n-2} \, \labs{A_{i,j} - A_{i+1,j}}.
\end{align*}
It is clear that the only terms contributing to the above sum are those for which  $r_i \in L $ and $ r_i + j \notin L$ or $r_i \notin L $ and $ r_i + j \in L$. In such cases, we say that there is a jump at $i$. Note that, if $0 < r_i \in L $, then $n -r_i \in R$.
Our goal is to count the number of such jumps. We note that for a fixed $j$, $f(i) = ij$ is an increasing function as $i$ goes from $0$ to $n-1$. Another important observation is that as $ 0 < j \leq \floor{\frac{n}{2}}$, if $k \in L_0$, then $ 0 < k + j \leq 2m -1$. It means that if $k$ is in the blue segment, $k+j$ would either remain in the blue segment or, at most, can get to the neighboring red segment on the right (refer to \mfig{fig:LH}). 

Further, we observe that $f(0) = 0$ lies in the first blue segment on the left and the point  $f(n-1) = (n-1)j = (j-1)n + (n-j) $ lies in the last $R$ segment on the right in \mfig{fig:LH}. The image of $f(i)$, as $i$ goes from $0$ to $(n-1)j  $, would contain at least one point from each of the blue and red segments that lie between the end points corresponding to $f(0)$ and $f(n-1)$.
As $(n-1)j= (j-1)n + (n-j)$, it is easy to see that there are  
the $(j-1)$ pairs of blue and red segments (corresponding to the term $(j-1)n$), and an additional pair of blue and red segments corresponding to the term $(n-j)$. Since, we have a total $j$ pairs of blue and red segments, we get that there are $2j-1$ jumps. This shows that $\Seq{j} = 2j -1$, and we are done in this case.

If $n= 2m + 1 $, then the preceding argument still holds if we change the definitions of  $L_0$ and $R_0$ as follows:
$ L_0 = \{ \iseqe{m} \}$ and $R_0 = \{m+1,\, m+2,\, \ldots \,,\, n-1 \}$.

\noindent \textbf{Case 2} ($ \floor{\frac{n}{2}} < j \leq n-1$): The proof in this case is similar to the previous case. We observe that if $0 < r_i \in L $, then $n -r_i \in R$. However, if $r_i=0$, then $r_i \in L$ and $n-r_i \in L $.  
Upon using these observations and interchanging the roles of $L$ and $R$ in Case 1 (if $r_i >0$), it follows  that 
$\Seq{j} = \Seq{n-j} + 1 $, for $ \floor{\frac{n}{2}} < j \leq n-1$. 

\begin{figure}[htbp]
    \centering
    \begin{tikzpicture}[scale=0.9]
        \draw[blue, thick] (0,0) -- node[above] {L} (2,0);
        \draw[red, thick] (2,0) -- node[above] {R} (4,0);
        
        \draw[blue, thick] (4,0) -- node[above] {L} (6,0);
        \draw[red, thick] (6,0) -- node[above] {R} (8,0);
        
        \draw[blue, thick, dotted] (8,0) -- node[above] {L} (10,0);
        \draw[red, thick, dotted] (10,0) -- node[above] {R} (12,0);

        \draw[blue, thick] (12,0) -- node[above] {L} (14,0);
        \draw[red, thick] (14,0) -- node[above] {R} (16,0);

        \foreach \x in {0, 2, 4, 6, 8, 10, 12, 14,16}{
            \filldraw[black] (\x,0) circle (2pt);
            }

        \foreach \x in {2, 3, 4}{
             \node[below] at (2*\x,0) {$\x m$};
            }
         \node[below] at (2,0) {$ m$};

        \node[below] at (12,0) {$2(j-1) m$};  
        \node[below] at (14,0) {$(2j-1)m$};
         \node[below] at (16,0) {$2jm$};

        \node[below] at (0,0) {$0$};
    \end{tikzpicture}
     \caption{Illustration of the segments $L$ (blue) and $R$ (red) as describe in the proof of Theorem \ref{thm:Aijone}. 
     We note that the boundary points $0$, $2m$, $\ldots$, $2jm \in L$ and $m$, $3m$, $\ldots$, $(2j-1)m \in R$.}
    \label{fig:LH}
\end{figure}
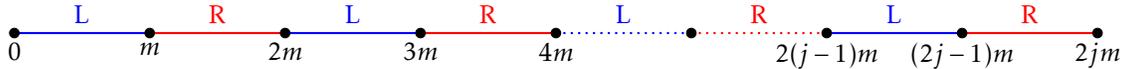
\end{proof}

Finally, the following result captures an interesting method of generating \myseqo \, matrices.
\begin{thm}
    If $A \in \Mset$, with $n>1$, such that the $(i,j)$-th element of $A$ is given by
\begin{equation} \label{eq:example3}
A_{i,j}  = 
\begin{cases}
    1, &  \qquad \text{{if $n=2m$   and  }} ( \floor{i (j+1)/2} \equiv t \mod n, \\
     1, &  \qquad \text{{if $n=2m+1$,   $i(j+1) \equiv 1 \mod 2$}} \text{ and } (  \floor{i(j+1)/2} \equiv t^\prime \mod n), \\
      1, &  \qquad \text{{if $n=2m+1$,   $i(j+1) \equiv 0 \mod 2$}} \text{ and } (  \floor{i(j+1)/2} \equiv t \mod n,\\
 -1, & \qquad \text{{otherwise}},
\end{cases}
\end{equation}
where $m$ is a positive integer and $t \in \{ \iseqe{m-1} \}$ and $t^\prime \in \{ \iseqe{m} \}$, then $A$ is \myseqo \,.
\end{thm}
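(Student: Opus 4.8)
The plan is to reduce the assertion ``$A$ is \myseqo'' to the single statement $\Seq{j}=j$ for every $j\in\{\iseq{n}\}$; by Definition~\ref{def:seq_ordered} this is exactly what needs to be shown (and it automatically gives \myseqc). The proof I would give has two parts: first repackage the three cases in \meqref{eq:example3} into one uniform formula, and then read off the number of sign changes by a short telescoping argument, avoiding the segment-by-segment bookkeeping used for Theorems~\ref{thm:Aijzero} and~\ref{thm:Aijone}.

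The key preliminary step is to show that, irrespective of the parity of $n$,
\[
A_{i,j}=(-1)^{\floor{i(j+1)/n}},\qquad\text{equivalently}\qquad A_{i,j}=1\iff\bigl(i(j+1)\bmod 2n\bigr)<n .
\]
For $n=2m$ this follows from the identity $\floor{\floor{x/2}/m}=\floor{x/(2m)}$ together with the observation that for an integer $s$ one has $s\bmod 2m<m$ iff $\floor{s/m}$ is even; applying this with $s=\floor{i(j+1)/2}$ turns the condition ``$\floor{i(j+1)/2}\bmod n\in\{\iseqe{m-1}\}$'' into ``$\floor{i(j+1)/n}$ is even''. For $n=2m+1$ the split on the parity of $i(j+1)$, combined with the two residue ranges, is arranged precisely so that the set of $(i,j)$ with $A_{i,j}=1$, when the relevant residue is lifted modulo $2n$, is exactly $\{0,1,\dots,n-1\}$; one verifies this by separating $G:=i(j+1)\bmod 2n$ into its even and odd values and matching each against the corresponding range. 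I expect this reformulation to be the main obstacle, because it is the only place where the precise (asymmetric) residue ranges in the odd-$n$ case are used, and it is where any bookkeeping slip would show up.

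Granting the uniform form, the count is immediate. By \meqref{def:seq}, $\Seq{j}$ equals the number of $i\in\{\iseqe{n-2}\}$ for which $A_{i,j}\neq A_{i+1,j}$, i.e.\ for which $\floor{i(j+1)/n}$ and $\floor{(i+1)(j+1)/n}$ have opposite parity. The crucial point is that the increment $\floor{(i+1)(j+1)/n}-\floor{i(j+1)/n}$ corresponds to a step of size $(j+1)/n$, and since $1\le j+1\le n$ this step is at most $1$; hence the increment is either $0$ or $1$. Therefore the parity changes at index $i$ if and only if $\floor{\cdot}$ strictly increases there, and the count telescopes:
\[
\Seq{j}=\sum_{i=0}^{n-2}\Bigl(\floor{(i+1)(j+1)/n}-\floor{i(j+1)/n}\Bigr)=\floor{(n-1)(j+1)/n}.
\]
Finally, $(n-1)(j+1)/n=(j+1)-(j+1)/n$ with $0<(j+1)/n\le 1$, so this quantity lies in $[\,j,\ j+1)$ (the value $j$ being attained exactly when $j=n-1$), whence $\Seq{j}=j$. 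As this holds for all $j$, $A$ is \myseqo.

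I would close with a remark tying this to the earlier constructions: in Theorems~\ref{thm:Aijzero} and~\ref{thm:Aijone} the columns carried the interleaved sequencies $0,1,3,5,\dots$ and $2,4,6,\dots$ obtained by counting crossings of length-$m$ blocks under a step of size equal to the column multiplier; passing to the multiplier $\floor{i(j+1)/2}$ — equivalently, sampling at the ``half-resolution'' $i(j+1)/n$ — is exactly what rescales those crossing counts down to $0,1,2,\dots$, and the parity split when $n=2m+1$ is the correction that compensates for the unequal lengths of the two residue blocks.
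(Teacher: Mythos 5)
Your route is genuinely different from the paper's and, in its counting step, both cleaner and more complete. The paper proves this theorem by the same segment-crossing bookkeeping as Theorem~\ref{thm:Aijone}: it tracks how $f(i)=i(j+1)/2$ moves through alternating blocks $L$ and $R$ of length $m$, counts the jumps, and explicitly treats only the subcase $n=2m$, $j+1=2q$, asserting that the remaining cases are similar. You instead collapse the three clauses of \meqref{eq:example3} into the single formula $A_{i,j}=(-1)^{\floor{i(j+1)/n}}$ and then obtain $\Seq{j}=\floor{(n-1)(j+1)/n}=j$ by telescoping, using that each increment of $\floor{i(j+1)/n}$ is $0$ or $1$ because $j+1\le n$. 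That telescoping argument is airtight and covers all parities of $n$ and $j$ at once, which is a real improvement over the paper's sketch.

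The one place you must be careful is exactly the step you flagged and deferred: the odd-$n$ verification. Carried out against the ranges as printed in \meqref{eq:example3}, it fails. Writing $G=i(j+1)\bmod 2n$ with $n=2m+1$, the printed clauses ($t'\in\{0,\dots,m\}$ when $i(j+1)$ is odd, $t\in\{0,\dots,m-1\}$ when it is even) make $A_{i,j}=1$ precisely when $G\in\{0,1,\dots,n-2\}\cup\{n\}$, not when $G\in\{0,1,\dots,n-1\}$, so the identification $A_{i,j}=(-1)^{\floor{i(j+1)/n}}$ does not hold; indeed with these ranges and $n=5$ the $0$-th column comes out as $(1,1,1,1,-1)^T$, of sequency $1$, so the matrix is not even \myseqo{} as printed. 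The statement evidently intends the two ranges swapped ($t$ for odd $i(j+1)$, $t'$ for even), which is what the matrices generated for Fig.~\ref{fig:Seq_ordered} actually use; with that correction your unification is exactly right and the rest of your proof goes through unchanged. So: carry out the parity-of-$G$ check explicitly rather than asserting it --- doing so both closes the only gap in your argument and surfaces the typo in the statement.
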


\newcommand{\mySeqB}[4]{%
\pgfmathtruncatemacro{\nn} {int(#3)}%
\pgfmathtruncatemacro{\result}{mod(#1 * (#2+ 1)/2, \nn) < (\nn/2) ? int(1) : int(-1)}%
  \result
}

\begin{figure}
\begin{center}
\begin{tikzpicture}[scale=1]
  \matrix (images) [matrix of nodes, nodes in empty cells,
                    row sep= 0.3 em, column sep=2 em,
                    nodes={anchor=center, inner sep=0pt, outer sep=0pt}] {
    \node {$\mybMat{2}{mySeqB}$};  &
    \node {$\mybMat{3}{mySeqB}$};  &
    \node {$ \mybMat{4}{mySeqB}$};  \\
      \node{$n=2$};  &
    \node {$n=3$}; &
    \node {$n=4$}; \\ \\
     \node {$\mybMat{5}{mySeqB}$};  &
    \node {$\mybMat{6}{mySeqB}$}; &
    \node {$ \mybMat{7}{mySeqB}$}; \\
      \node{$n=5$};  &
    \node {$n=6$}; &
    \node {$n=7$}; \\ \\
  };
\end{tikzpicture}
\end{center}
    \caption{Some examples of \myseqo \, matrices. These matrices were generated using \meqref{eq:example3} for $ 2 \leq n \leq 7$.}
    \label{fig:Seq_ordered}
\end{figure}
 
\begin{proof} 
    The proof of this theorem is similar to that of Theorem \ref{thm:Aijone}. There are several cases to consider. We give the details only for the case when $n=2m$ and $j+ 1 = 2 q$, with $m, q \in \NN$, $q \geq 1$ and $m \geq 1$.  We note that, in this case, as $i$ runs through $0$ to $n-1$, the function $f(i) = i (j+1)/2 $ (which is an increasing function) takes values from $f(0) = 0$ to $f(n-1) = (n-1)q $. Similar to the proof of Theorem \ref{thm:Aijone}, $\Seq{j}$ can be computed using the values of $f$ at the endpoints, i.e., $f(0) = 0$ and $f(n-1) = (n-1)q$.
   Moreover, $(n-1) q = (q-1)n + n- q$. Note that as $ 1 \leq j \leq n-1$, $ m \leq n- q \leq n-1 $.
   This gives a total of $2 (q-1) + 1 = 2q - 2 + 1 = j$ jumps. Therefore, $\Seq{j} = j$. Other cases are similar and it can be easily verified that $\Seq{j} = j$ holds for each case. 
\end{proof}

\section{Tensor product of \myseqo \, matrices}
\label{sec:tensor}

In this section, we obtain mathematical results related to the sequency of columns of tensor products of two matrices.  It is noteworthy that the tensor product of two \myseqo \, matrices may not necessarily be \myseqo. As an example, consider the matrix $A \otimes A$, 
where \[
A = \mybMat{5}{mySeqB}.
\]
Clearly, $A$ is \myseqo. Further, $A \otimes A$ is a $25 \times 25$ matrix. 
However, one observes that although $A \otimes A$ is \myseqc, it is not \myseqo. 
The sequencies for columns of $A \otimes A$, for the columns $j= \iseqe{24} $ are listed below:
\[
 0, 9, 10, 19, 20, 1, 8, 11, 18, 21, 2, 7, 12, 17, 22, 3, 6, 13, 16, 23, 4, 5, 14, 15, 24. 
\]
If the sequency of the $j$-th column of $A \otimes A$ is denoted by $\Seq{j}$, then 
one can check that the above can be obtained using the following.
\[
\Seq{j} = 
\begin{cases}
  j/5   \quad  \text{if } j  \equiv 0 \mod 5, \\
 9 - (j-1)/5   \quad  \text{if } j  \equiv 1 \mod 5, \\
10 + (j-2)/5   \quad  \text{if } j  \equiv 2 \mod 5, \\
 19 - (j-3)/5   \quad  \text{if } j  \equiv 3 \mod 5, \\  
20 + (j-4)/5   \quad  \text{if } j  \equiv 4 \mod 5.
\end{cases}
\]

The question we ask is: can we give a precise formula for the sequency of any column of $ A^{\otimes n}$ for any integer $n >1$, if the sequency $\Seq{k}$ of the $k$-th column of $A$ is known for $k \in \{\iseq{n}\}$. In the following, $q$ will be assumed to be a positive integer such that $q  \geq 2$.

\begin{lem} \label{lem:seqAA}
    Let $A \in \Msetg{q}$ with $ q \in \ZZ $, $q \geq 2$.  Let 
  $p_k = \frac{1}{2} \labs{A_{0,k} - A_{(q-1),k}}$, where $A_{i,j}$ denotes the $(i,j)$-th element of $A$, with $i$ being the row index and $j$ the column index.
Let $\Seq{k_1k_0}$ denote the sequency of the column $A_{(k_1k_0)}$ of $A \otimes A$.
Then 
\begin{equation}
    \Seq{k_1k_0} = q \Seq{k_0} + (q-1) p_{k_0} + (-1)^{p_{k_0}} \Seq{k_1}.
\end{equation}
\end{lem}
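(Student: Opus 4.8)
The plan is to write the column $A_{(k_1k_0)}$ of $A\otimes A$ explicitly as a concatenation of $q$ length-$q$ blocks, count the sign changes that occur strictly inside the blocks, then count the sign changes occurring at the $q-1$ block boundaries, and finally add the two contributions.

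First I would set $a = A_{(k_1)} = (a_0,\dots,a_{q-1})^{T}$ and $b = A_{(k_0)} = (b_0,\dots,b_{q-1})^{T}$, the relevant columns of $A$. With the Kronecker convention used in the paper (a row indexed by $i_1i_0$ is the row numbered $qi_1+i_0$), the entry of $A_{(k_1k_0)}$ in row $qi_1+i_0$ is $a_{i_1}b_{i_0}$. Reading the column from top to bottom therefore gives the concatenation, for $i_1 = 0,1,\dots,q-1$, of the blocks $a_{i_1}(b_0,b_1,\dots,b_{q-1})^{T}$. Since each $a_{i_1}\in\{1,-1\}$, scaling $(b_0,\dots,b_{q-1})$ by $a_{i_1}$ leaves the number of consecutive sign changes unchanged, so each block contributes exactly $\Seq{k_0}$, for a total interior contribution of $q\,\Seq{k_0}$.

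Next I would count the sign changes at the $q-1$ junctions, i.e. between $a_{i_1}b_{q-1}$ (last entry of block $i_1$) and $a_{i_1+1}b_0$ (first entry of block $i_1+1$) for $i_1 = 0,\dots,q-2$. As all entries are $\pm 1$, there is a sign change at junction $i_1$ exactly when $a_{i_1}a_{i_1+1}\,b_0b_{q-1} = -1$. By the definition of $p_{k_0}$ one has $b_0b_{q-1} = (-1)^{p_{k_0}}$. If $p_{k_0}=0$, junction $i_1$ is a sign change iff $a_{i_1}a_{i_1+1} = -1$, i.e. iff $i_1,i_1+1$ is a sign change of $A_{(k_1)}$, so there are $\Seq{k_1}$ such junctions. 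If $p_{k_0}=1$, junction $i_1$ is a sign change iff $a_{i_1}a_{i_1+1} = 1$, i.e. at the $(q-1)-\Seq{k_1}$ positions that are not sign changes of $A_{(k_1)}$. In either case the number of junction sign changes equals $(q-1)p_{k_0} + (-1)^{p_{k_0}}\Seq{k_1}$, and adding the interior contribution $q\,\Seq{k_0}$ gives the stated identity.

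I do not anticipate a genuine obstacle: the argument is a direct count. The two points needing care are (i) pinning down the Kronecker indexing so that the column genuinely is a concatenation of scaled copies of $A_{(k_0)}$ (rather than an interleaving of copies of $A_{(k_1)}$), and (ii) verifying that the two parity cases $p_{k_0}\in\{0,1\}$ collapse into the single closed form $(q-1)p_{k_0}+(-1)^{p_{k_0}}\Seq{k_1}$, which they do since $(-1)^{0} = 1$ and $(q-1)\cdot 1 + (-1)^{1}\Seq{k_1} = (q-1) - \Seq{k_1}$.
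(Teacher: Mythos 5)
Your proposal is correct and follows essentially the same route as the paper: both decompose the column of $A\otimes A$ into $q$ scaled copies of $A_{(k_0)}$, attribute $q\,\Seq{k_0}$ to the block interiors, and count the $q-1$ junction sign changes by a case analysis that yields $(q-1)p_{k_0}+(-1)^{p_{k_0}}\Seq{k_1}$. The only cosmetic difference is that you detect a junction sign change via the product $a_{i_1}a_{i_1+1}b_0b_{q-1}=-1$ and split on the value of $p_{k_0}$, whereas the paper splits the boundary sum according to whether $A_{i_1,k_1}=A_{i_1+1,k_1}$ and evaluates the resulting absolute values; the two bookkeeping schemes are equivalent.
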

\begin{proof} 
Note that the $(i,k)$-th element of $A \otimes A$ is given by  $ (A \otimes A)_ {i , k } =  A_{i_1,k_1} A_{i_0,k_0} $, where  $i_1i_0 = q i_1 + i_0 $ and $k_1k_0 = q k_1 + k_0  $ are  representations of $i$ and $k$ in base $q$, respectively. 
Therefore, we have
\begin{align} \label{eq:seq_tensor_first}
     \Seq{k_1k_0} & =  \frac{1}2 \sum_{i=0}^{q^2-2}  \labs{(A \otimes A)_ {i , k } - (A \otimes A)_ {i+1 , k }}  \nonumber \\
     & =  \frac{1}2 \sum_{i \nequiv -1 \mod q, \, 0 \leq i \leq q^2 -2}  \labs{(A \otimes A)_ {i , k } - (A \otimes A)_ {i+1 , k }}  + \frac{1}2 \sum_{i \equiv -1 \mod q, \, 0 \leq i \leq q^2 -2}  \labs{(A \otimes A)_ {i , k } - (A \otimes A)_ {i+1 , k }}  \nonumber \\
     & =  \frac{1}2 \sum_{i_1=0}^{q -1} \sum_{i_0 = 0}^{q -2}  \labs{A_{i_1,k_1} A_{i_0,k_0}  - A_{i_1,k_1} A_{i_0+1,k_0} }  + \frac{1}2 \sum_{i_1  = 0}^{q -2}    \labs{A_{i_1,k_1} A_{q-1,k_0}  - A_{i_1+1,k_1} A_{0,k_0}}  \nonumber \\
      & =  \frac{1}2 \sum_{i_1=0}^{q -1} \sum_{i_0 = 0}^{q -2}  \labs{ A_{i_0,k_0}  - A_{i_0+1,k_0} }  + \frac{1}2 \sum_{i_1  = 0}^{q -2}    \labs{A_{i_1,k_1} A_{q-1,k_0}  - A_{i_1+1,k_1} A_{0,k_0}}  \nonumber \\
    & =  q \Seq{k_0}  + \frac{1}2 \sum_{i_1  = 0}^{q -2}    \labs{A_{i_1,k_1} A_{q-1,k_0}  - A_{i_1+1,k_1} A_{0,k_0}}.
\end{align}
    Before proceeding further,  define $L = \{ 0,\, 1,\, \ldots,\, q-2 \} \cap \{j \ : \ A_{j,k_1} = A_{(j+1),k_1} \} $ and $M = \{ 0,\, 1,\, \ldots,\, q-2 \} \cap \{j \ : \ A_{j,k_1} \neq A_{(j+1),k_1} \} $.
    Next, we split the second term on the right hand side of \meqref{eq:seq_tensor_first} into two parts depending upon the condition $A_{j,k_1} = A_{(j+1),k_1}$ or $A_{j,k_1} \neq A_{(j+1),k_1}$.
      We have
    \begin{align} \label{eq:seq_tensor_second}
    & \frac{1}2 \sum_{i_1=0}^{q-2} \labs{A_{i_1,k_1} A_{(q -1),k_0} - A_{(i_1+1),k_1} A_{0,k_0}}  \nonumber \\ & =  \frac{1}2 \sum_{ i_1 \in L} \labs{A_{i_1,k_1} A_{(q -1),k_0} - A_{(i_1+1),k_1} A_{0,k_0}}  + \frac{1}2 \sum_{i_1\in M} \labs{A_{i_1,k_1} A_{(q -1),k_0} - A_{(i_1+1),k_1} A_{0,k_0}}  \nonumber\\
    & = \frac{1}2 \sum_{i_1\in L} \labs{ A_{(q -1),k_0} -  A_{0,k_0}}  + \frac{1}2 \sum_{i_1\in M} \labs{ A_{(q -1),k_0} + A_{0,k_0}} \nonumber \\
    & = p_{k_0} (q- 1 - \Seq{k_1}) + \frac{1}2 \labs{ A_{(q -1),k_0} + A_{0,k_0}} \Seq{k_1} \nonumber \\
    & = (q-1) p_{k_0} + (-1)^{p_{k_0}} \Seq{k_1}.
    \end{align}
    The proof immediately follows from \meqref{eq:seq_tensor_first} and \meqref{eq:seq_tensor_second}. 
\end{proof}

\begin{thm}\label{thm:seqc_A_A}
    If $A \in \Msetg{q}$ is \myseqc, then $A \otimes A$ is \myseqc, i.e., if $A \in M_n(\FF, C) $ then $A \otimes A \in M_n(\FF, C)$. Here $ q \in \ZZ $, $q \geq 2$.
\end{thm}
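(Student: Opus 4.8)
The plan is to read off everything from the closed form established in Lemma~\ref{lem:seqAA},
\[
\Seq{k_1k_0} = q\Seq{k_0} + (q-1)p_{k_0} + (-1)^{p_{k_0}}\Seq{k_1},
\]
and to interpret the right-hand side as a base-$q$ expansion. The first step is to notice that $p_{k_0}\in\{0,1\}$, so the formula collapses to exactly one of two shapes: if $p_{k_0}=0$ then $\Seq{k_1k_0}=q\Seq{k_0}+\Seq{k_1}$, while if $p_{k_0}=1$ then $\Seq{k_1k_0}=q\Seq{k_0}+(q-1-\Seq{k_1})$. (One may also note that $p_{k_0}\equiv\Seq{k_0}\bmod 2$, since the first and last entries of a $\pm1$ column differ precisely when the number of sign changes is odd, but this is a convenience and not needed below.) In both cases the coefficient of $q$ is $\Seq{k_0}\in\{\iseq{q}\}$, and the remaining summand lies in $\{\iseq{q}\}$ as well, since both $\Seq{k_1}$ and $q-1-\Seq{k_1}$ do.

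Next I would invoke the hypothesis that $A$ is \myseqc, i.e.\ $k\mapsto\Seq{k}$ is a bijection of $\{\iseq{q}\}$ onto itself. Consequently, as $k_0$ runs over $\{\iseq{q}\}$ the ``high digit'' $\Seq{k_0}$ runs over $\{\iseq{q}\}$; and for each fixed $k_0$ --- which fixes $p_{k_0}$ --- the map $k_1\mapsto(q-1)p_{k_0}+(-1)^{p_{k_0}}\Seq{k_1}$ is a bijection of $\{\iseq{q}\}$ onto $\{\iseq{q}\}$, being the composite of the bijection $k_1\mapsto\Seq{k_1}$ with the bijection $b\mapsto(q-1)p_{k_0}+(-1)^{p_{k_0}}b$ (which is $b\mapsto b$ or $b\mapsto q-1-b$). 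Writing $a=\Seq{k_0}$ and $r$ for the value of that second map, we obtain $\Seq{k_1k_0}=qa+r$ with $a,r\in\{\iseq{q}\}$.

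The conclusion is then forced by uniqueness of base-$q$ digits. Suppose $\Seq{k_1k_0}=\Seq{k_1'k_0'}$; writing both as $qa+r$ and $qa'+r'$ with digits in $\{\iseq{q}\}$ gives $a=a'$ and $r=r'$. Sequency-completeness of $A$ upgrades $a=a'$ to $k_0=k_0'$, hence $p_{k_0}=p_{k_0'}$, and then $r=r'$ together with the bijectivity above yields $\Seq{k_1}=\Seq{k_1'}$, i.e.\ $k_1=k_1'$. Thus $(k_1,k_0)\mapsto\Seq{k_1k_0}$ is injective; since $A\otimes A$ has exactly $q^2$ columns and every value $\Seq{k_1k_0}$ lies in $\{\iseq{q^2}\}$, injectivity already makes it a bijection onto $\{\iseq{q^2}\}$, which is precisely the statement that $A\otimes A$ is \myseqc. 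I anticipate no genuine difficulty here: the proof is pure base-$q$ bookkeeping once Lemma~\ref{lem:seqAA} is available, and the only steps deserving a second glance are that the reflection $b\mapsto q-1-b$ still sweeps out all of $\{\iseq{q}\}$ and that the high digit (controlled by $k_0$) and low digit (controlled by $k_1$) do not interact --- both guaranteed by the base-$q$ representation.
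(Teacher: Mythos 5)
Your proposal is correct and follows essentially the same route as the paper: both start from the formula of Lemma~\ref{lem:seqAA} and observe that for fixed $k_0$ the quantity $(q-1)p_{k_0}+(-1)^{p_{k_0}}\Seq{k_1}$ sweeps out $\{\iseq{q}\}$ bijectively, so that $\Seq{k_1k_0}$ takes all $q^2$ distinct values. The paper states this more tersely; your explicit base-$q$ digit-uniqueness argument simply fills in the details the paper leaves implicit.
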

\begin{proof}
    From Lemma \ref{lem:seqAA}, we have
    \[
      \Seq{k_1k_0} = q \Seq{k_0} + (q-1) p_{k_0} + (-1)^{p_{k_0}} \Seq{k_1}.
    \]
    It is clear that for a fixed $k_0$, the set $\{(q-1) p_{k_0} + (-1)^{p_{k_0}} \Seq{k_1} \ : \ k_1 \in \{0,\, 1, \, \ldots,\, q-1 \} \}$ is in bijection with $\{\iseq{q}\}$. Therefore, as $k_0$ varies from $0$ to $q-1$, the $\Seq{k_1k_0}$ takes distinct values from $0$ to $q^2-1$. It means $A \otimes A \in M_n(\FF, C)$. 
\end{proof}
We note that $\Seq{01} = q \Seq{1} + (q-1) p_{1} (-1)^{p_{1}} \Seq{0}  $.  If $\Seq{0} = 0$ and $\Seq{1} = 1$, then $\Seq{01} = q  + (q-1) p_{1}  \neq 1  $. From this observation, the corollary given below immediately follows.  
\begin{cor}
    If $A \in \Msetg{q}$ is \myseqo, then $A \otimes A$ is not \myseqo.
\end{cor}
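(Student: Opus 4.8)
The plan is to exhibit a single column of $A \otimes A$ whose sequency does not equal its index; by Definition~\ref{def:seq_ordered} this already shows $A \otimes A$ is not \myseqo. The natural candidate is the column of index $1$, namely $A_{(01)}$, which corresponds to $k_1 = 0$ and $k_0 = 1$ (since $q\cdot 0 + 1 = 1$).

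First I would apply Lemma~\ref{lem:seqAA} with $k_1 = 0$ and $k_0 = 1$, giving
\[
\Seq{01} = q\,\Seq{1} + (q-1)\,p_1 + (-1)^{p_1}\,\Seq{0}.
\]
Since $A$ is \myseqo, Definition~\ref{def:seq_ordered} yields $\Seq{0} = 0$ and $\Seq{1} = 1$, so this collapses to $\Seq{01} = q + (q-1)\,p_1$. (Note the displayed identity in the remark preceding the corollary is missing a ``$+$'' before $(-1)^{p_1}\Seq{0}$; the correct form is the one above, straight from Lemma~\ref{lem:seqAA}.)

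Next I would note $p_1 = \frac{1}{2}\labs{A_{0,1} - A_{q-1,1}} \in \{0,1\}$, so $\Seq{01} \in \{q,\ 2q-1\}$. In either case, because $q \geq 2$, we get $\Seq{01} \geq q \geq 2 > 1$. Hence the column of $A \otimes A$ indexed by $1$ has sequency different from $1$, so $A \otimes A$ cannot be \myseqo, which is the claim.

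There is essentially no obstacle: the statement is an immediate consequence of Lemma~\ref{lem:seqAA} and the defining property of \myseqo \, matrices. The only point deserving a word of care is the index bookkeeping, i.e.\ confirming that the pair $(k_1,k_0) = (0,1)$ labels precisely the second column (index $1$) of $A \otimes A$, so that sequency-orderedness would force its sequency to be exactly $1$ — which we have just contradicted.
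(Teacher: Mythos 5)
Your proof is correct and follows essentially the same route as the paper, which likewise evaluates $\Seq{01}$ via Lemma~\ref{lem:seqAA} with $\Seq{0}=0$, $\Seq{1}=1$ to get $\Seq{01}=q+(q-1)p_1\neq 1$. You are also right that the paper's displayed identity in the preceding remark has a typographical omission of the ``$+$'' before $(-1)^{p_1}\Seq{0}$; your corrected form matches Lemma~\ref{lem:seqAA}.
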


The result we proved above for $A \otimes A$ can be generalized and similar results for $A \otimes B$ can be obtained as follows.

\begin{lem} \label{lem:seqAB}
    Let $A \in \Msetg{q_1}$ and $B  \in \Msetg{q_0}$, where $ q_1, q_0 \in \ZZ $, $q_1, q_0 \geq 2$.    Let 
  $p_{m} = \frac{1}{2} \labs{B_{0,m} - B_{q_0-1,m}}$, $m \in \{\iseq{q_0}\}$, where $B_{i,j}$ denotes the $(i,j)$-th element of $B$, with $i$ being the row index and $j$ the column index.
If $\Seq{k}$ denotes the sequency of the column $(A \otimes B)_{(k)}$ of $A \otimes B$, where  $k= k_1 q_0 + k_0$ with $k_1 \in \{\iseq{q_1}\}$ and  $ k_0 \in \{\iseq{q_0}\}$,
then  
\begin{equation} \label{eq_seq_A_cross_B}
    \Seq{k} = q_1 \Seq{k_0} + (q_1-1) p_{k_0} + (-1)^{p_{k_0}} \Seq{k_1}.
\end{equation}
\end{lem}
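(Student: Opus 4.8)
The plan is to adapt, almost verbatim, the computation in the proof of Lemma~\ref{lem:seqAA}, now carrying the two radices $q_1$ and $q_0$ separately. First I would fix the mixed-radix bookkeeping: writing $i = i_1 q_0 + i_0$ and $k = k_1 q_0 + k_0$ with $i_1, k_1 \in \{\iseq{q_1}\}$ and $i_0, k_0 \in \{\iseq{q_0}\}$, the entries of the tensor product are $(A \otimes B)_{i,k} = A_{i_1,k_1}\, B_{i_0,k_0}$. Then I would write
\[
\Seq{k} = \frac{1}{2} \sum_{i=0}^{q_1 q_0 - 2} \labs{(A\otimes B)_{i,k} - (A\otimes B)_{i+1,k}}
\]
and split the index range according to whether passing from $i$ to $i+1$ carries out of the low-order digit, i.e.\ whether $i_0 < q_0 - 1$ (an ``interior'' step, with $i_1$ unchanged) or $i_0 = q_0 - 1$ (a ``carry'' step, where $i_0$ wraps to $0$ and $i_1$ increments).

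For the interior steps the common factor $A_{i_1,k_1}$ has modulus $1$ and pulls out, so that part of the sum equals $\frac{1}{2}\sum_{i_1=0}^{q_1-1}\sum_{i_0=0}^{q_0-2}\labs{B_{i_0,k_0} - B_{i_0+1,k_0}} = q_1\,\Seq{k_0}$. For the carry steps the relevant quantity is $\frac{1}{2}\sum_{i_1=0}^{q_1-2}\labs{A_{i_1,k_1} B_{q_0-1,k_0} - A_{i_1+1,k_1} B_{0,k_0}}$, which I would break according to whether $A_{i_1,k_1} = A_{i_1+1,k_1}$ (index set $L$) or $A_{i_1,k_1} = -A_{i_1+1,k_1}$ (index set $M$). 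By the definition of sequency applied to the $k_1$-th column of $A$, we have $\labs{M} = \Seq{k_1}$ and $\labs{L} = q_1 - 1 - \Seq{k_1}$. On $L$ the summand is $\labs{B_{q_0-1,k_0} - B_{0,k_0}} = 2 p_{k_0}$, while on $M$ it is $\labs{B_{q_0-1,k_0} + B_{0,k_0}} = 2(1 - p_{k_0})$, the last equality because $p_{k_0}\in\{0,1\}$ and $B_{0,k_0}, B_{q_0-1,k_0}\in\{1,-1\}$. Collecting the carry contribution gives $(q_1-1)p_{k_0} + (1 - 2p_{k_0})\Seq{k_1}$, and since $1 - 2 p_{k_0} = (-1)^{p_{k_0}}$ for $p_{k_0}\in\{0,1\}$, adding back $q_1\Seq{k_0}$ yields exactly \eqref{eq_seq_A_cross_B}.

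I do not expect a genuine obstacle here: this is a near-verbatim generalization of Lemma~\ref{lem:seqAA}. The one point needing care is the asymmetric role of the two factors --- the multiplicities $\labs{L}, \labs{M}$, and hence the coefficients $q_1-1$ and $\Seq{k_1}$, are controlled by the \emph{outer} matrix $A$ (which supplies the $q_1$ ``blocks''), whereas the block length giving the factor $q_1$ in front of $\Seq{k_0}$, together with the boundary data $p_{k_0}$, $B_{0,k_0}$, $B_{q_0-1,k_0}$, all come from the \emph{inner} matrix $B$; keeping these two roles straight is the whole substance of the generalization, and it is also what makes the resulting formula fail to be symmetric in $A$ and $B$.
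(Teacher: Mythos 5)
Your proposal is correct and follows essentially the same route as the paper's own proof: the same split of the sequency sum into interior steps (yielding $q_1\Seq{k_0}$) and carry steps, and the same decomposition of the carry sum over the index sets $L$ and $M$ determined by the $k_1$-th column of $A$, with $\labs{L} = q_1 - 1 - \Seq{k_1}$ and $\labs{M} = \Seq{k_1}$. The only cosmetic difference is that you simplify via $1 - 2p_{k_0} = (-1)^{p_{k_0}}$ explicitly, whereas the paper writes the $M$-contribution as $\frac{1}{2}\labs{B_{q_0-1,k_0} + B_{0,k_0}}\Seq{k_1}$ before arriving at the same closed form.
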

\begin{proof} 
We note that the $(i,k)$-th element of $A \otimes B$ is given by  $ (A \otimes B)_ {i , k } =  A_{i_1,k_1} B_{i_0,k_0} $, where  $i_1, k_1 \in \{ \iseq{q_1}\} $ and $i_0, k_0 \in \{ \iseq{q_0}\} $, with  $k = q_0 k_1 + k_0  $ and $i = q_0 i_1 + i_0 $. 
Therefore, we have
\begin{align} \label{eq:seq_tensor_first_gen}
     \Seq{k} & =  \frac{1}2 \sum_{i=0}^{q_0 q_1 -2}  \labs{(A \otimes B)_ {i , k } - (A \otimes B)_ {i+1 , k }}  \nonumber \\
     & =  \frac{1}2 \sum_{i \nequiv -1 \mod q_0, \, 0 \leq i \leq q_0q_1 -2}  \labs{(A \otimes B)_ {i , k } - (A \otimes B)_ {i+1 , k }}  + \frac{1}2 \sum_{i \equiv -1 \mod q_0, \, 0 \leq i \leq q_0q_1 -2}  \labs{(A \otimes B)_ {i , k } - (A \otimes B)_ {i+1 , k }}  \nonumber \\
     & =  \frac{1}2 \sum_{i_1=0}^{q_1 -1} \sum_{i_0 = 0}^{q_0 -2}  \labs{A_{i_1,k_1} B_{i_0,k_0}  - A_{i_1,k_1} B_{i_0+1,k_0} }  + \frac{1}2 \sum_{i_1  = 0}^{q_1 -2}    \labs{A_{i_1,k_1} B_{q_0 -1,k_0}  - A_{i_1+1,k_1} B_{0,k_0}}  \nonumber \\
      & =  \frac{1}2 \sum_{i_1=0}^{q_1 -1} \sum_{i_0 = 0}^{q_0 -2}  \labs{ B_{i_0,k_0}  - B_{i_0+1,k_0} }  + \frac{1}2 \sum_{i_1  = 0}^{q_1 -2}    \labs{A_{i_1,k_1} B_{q_0-1,k_0}  - A_{i_1+1,k_1} B_{0,k_0}}  \nonumber \\
    & =  q_1 \Seq{k_0}  + \frac{1}2 \sum_{i_1  = 0}^{q_1 -2}    \labs{A_{i_1,k_1} B_{q_0-1,k_0}  - A_{i_1+1,k_1} B_{0,k_0}}.
\end{align}

    Let $L = \{ 0,\, 1,\, \ldots \, ,\, q_1-2 \} \cap \{j \ : \ A_{j,k_1} = A_{(j+1),k_1} \} $ and $M = \{ 0,\, 1,\, \ldots \,,\, q_1-2 \} \cap \{j \ : \ A_{j,k_1} \neq A_{(j+1),k_1} \} $. We have,
    \begin{align} 
    &  \frac{1}2 \sum_{i_1  = 0}^{q_1 -2}    \labs{A_{i_1,k_1} B_{q_0-1,k_0}  - A_{i_1+1,k_1} B_{0,k_0}}  \nonumber \\ & =  \frac{1}2 \sum_{ i_1 \in L} \labs{A_{i_1,k_1} B_{(q_0 -1),k_0} - A_{(i_1+1),k_1} B_{0,k_0}}  + \frac{1}2 \sum_{i_1\in M} \labs{A_{i_1,k_1} B_{(q_0 -1),k_0} - A_{(i_1+1),k_1} B_{0,k_0}}  \nonumber\\
    & = \frac{1}2 \sum_{i_1\in L} \labs{ B_{(q_0 -1),k_0} -  B_{0,k_0}}  + \frac{1}2 \sum_{i_1\in M} \labs{ B_{(q_0 -1),k_0} + B_{0,k_0}} \nonumber \\
    & = p_{k_0} (q_1- 1 - \Seq{k_1}) + \frac{1}2 \labs{ B_{(q_0 -1),k_0} + B_{0,k_0}} \Seq{k_1} \nonumber \\
    & = (q_1-1) p_{k_0} + (-1)^{p_{k_0}} \Seq{k_1}. \label{eq:seq_tensor_second_gen}
    \end{align}
    The proof immediately follows from \meqref{eq:seq_tensor_first_gen} and \meqref{eq:seq_tensor_second_gen}. 
\end{proof}

Theorem \ref{thm:seqc_A_B} given below follows from Lemma \ref{lem:seqAB}. The proof is similar to that of Theorem \ref{thm:seqc_A_A}.

\begin{thm} \label{thm:seqc_A_B}
    Let $A \in \Msetg{q_1}$ and $B  \in \Msetg{q_0}$, where $ q_1, q_0 \in \ZZ $, $q_1, q_0 \geq 2$. If $A$ and $B$ are \myseqo, then $A \otimes B$ is \myseqc.
\end{thm}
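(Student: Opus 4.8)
The plan is to imitate the proof of Theorem~\ref{thm:seqc_A_A}, feeding in the closed form for the column sequencies of $A \otimes B$ supplied by Lemma~\ref{lem:seqAB}. Index a column of $A\otimes B$ by $k = q_0 k_1 + k_0$ with $k_1 \in \{\iseq{q_1}\}$ and $k_0 \in \{\iseq{q_0}\}$. By Lemma~\ref{lem:seqAB},
\[
\Seq{k} \;=\; q_1 \Seq{k_0} \;+\; r(k_0,k_1), \qquad r(k_0,k_1) := (q_1-1)p_{k_0} + (-1)^{p_{k_0}} \Seq{k_1},
\]
and $p_{k_0} = \frac{1}{2}\labs{B_{0,k_0} - B_{q_0-1,k_0}} \in \{0,1\}$ since the entries of $B$ lie in $\FF = \{1,-1\}$. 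The goal is to show that $k \mapsto \Seq{k}$ is a bijection from the $q_1 q_0$ columns onto $\{\iseq{q_1 q_0}\}$; since both sets have $q_1 q_0$ elements, it is enough to establish injectivity together with the range bound $0 \le \Seq{k} \le q_1 q_0 - 1$.

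First I would record, exactly as in Theorem~\ref{thm:seqc_A_A}, that for each fixed $k_0$ the set $\lob{ r(k_0,k_1) : k_1 \in \{\iseq{q_1}\} }$ equals $\{\iseq{q_1}\}$: because $A$ is \myseqc (indeed \myseqo), the values $\Seq{k_1}$ run through $\{\iseq{q_1}\}$, and the map $x \mapsto x$ (when $p_{k_0}=0$) or $x \mapsto (q_1-1) - x$ (when $p_{k_0}=1$) permutes that set. In particular $0 \le r(k_0,k_1) \le q_1-1$ always, so, using $0 \le \Seq{k_0} \le q_0-1$ (valid since $B$ is \myseqc), the displayed identity reads $\Seq{k}$ off as its own Euclidean division by $q_1$, with quotient $\Seq{k_0}$ and remainder $r(k_0,k_1)$; in particular $\Seq{k} \le q_1(q_0-1)+(q_1-1)=q_1q_0-1$.

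Uniqueness of that division then gives injectivity: $\Seq{k}$ determines $\Seq{k_0}$, hence $k_0$ (as $B$ is \myseqc), hence $p_{k_0}$, hence $\Seq{k_1} = (-1)^{p_{k_0}}\lob{ r(k_0,k_1) - (q_1-1)p_{k_0} }$, hence $k_1$ (as $A$ is \myseqc). Combining injectivity with the range bound, $k \mapsto \Seq{k}$ is a bijection onto $\{\iseq{q_1 q_0}\}$, i.e.\ $A \otimes B$ is \myseqc.

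There is essentially no obstacle here: the only points needing care are the observation $p_{k_0}\in\{0,1\}$ and the ``fixed $k_0$'' sub-bijection, which together license treating the formula for $\Seq{k}$ as a base-$q_1$ expansion with ``digits'' $\Seq{k_0}\in\{\iseq{q_0}\}$ and $r(k_0,k_1)\in\{\iseq{q_1}\}$. One may equally phrase the count directly: as $k_0$ ranges over $\{\iseq{q_0}\}$ the term $q_1\Seq{k_0}$ takes the values $0, q_1, 2q_1, \ldots, q_1(q_0-1)$, and for each of these the remainder $r(k_0,k_1)$ independently sweeps $\{\iseq{q_1}\}$, so $\Seq{k}$ attains every value in $\{\iseq{q_1 q_0}\}$ exactly once. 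Note that the argument uses only that $A$ and $B$ are \myseqc, not that they are \myseqo.
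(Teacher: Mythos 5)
Your proposal is correct and follows exactly the route the paper intends: it applies Lemma~\ref{lem:seqAB} and repeats the argument of Theorem~\ref{thm:seqc_A_A}, reading $\Seq{k} = q_1\Seq{k_0} + r(k_0,k_1)$ as a Euclidean division with quotient $\Seq{k_0}$ and remainder in $\{\iseq{q_1}\}$. You merely spell out the injectivity and range bounds more explicitly than the paper (which only says the proof is similar to that of Theorem~\ref{thm:seqc_A_A}), and your observation that sequency-completeness of $A$ and $B$ suffices is a correct, mild sharpening of the stated hypothesis.
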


As a consequence of Lemma \ref{lem:seqAB} stated above, we obtain the following result.
\begin{thm}
 Let $A \in \Msetg{q}$, with $ q \in \ZZ $, $q \geq 2$. 
Let $p_k = p(A_{(k)})$ (we recall that for any column vector  $v = [v_0 \,\,\, v_1 \,\,\, \ldots \,\,\, v_{n-1}]^T$ (where $v \in \FF^n$),  $p(v) = \frac{1}{2} \labs{v_0 - v_{n-1}} $).
If $\Seq{\bseqg{k}{0}{n-1}}$ denotes the sequency of the column $A_{\bseqg{k}{0}{n-1}}$ of $A^{\otimes n}$, then we have
\begin{align}
S_{\bseqg{k}{0}{n-1}} &= \sum_{r=0}^{n-1}    (-1)^{ \sum_{m=0}^{r-1} p_{k_m} } \lob{ q^{n-1-r}   \Seq{k_r} + 
(q^{n-1-r} -1) p_{k_r}}.
\end{align}
Here we  treat the expression ${ \sum_{m=0}^{r-1} p_{k_m} }$ as $0$ for $r=0$.
\end{thm}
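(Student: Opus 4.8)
The plan is to prove this by induction on $n$, using Lemma~\ref{lem:seqAB} as the inductive step. The base case $n=1$ is an identity: the right-hand side has only the $r=0$ term, which equals $q^{0}\Seq{k_0}+(q^{0}-1)p_{k_0}=\Seq{k_0}$, the sequency of the $k_0$-th column of $A=A^{\otimes 1}$. (One could equally begin at $n=2$, where the statement is precisely Lemma~\ref{lem:seqAA}.)

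For the inductive step I would assume the formula for $A^{\otimes(n-1)}$ with $n\geq 2$ and write $A^{\otimes n}=A^{\otimes(n-1)}\otimes A$, then apply Lemma~\ref{lem:seqAB} with left factor $A^{\otimes(n-1)}\in\Msetg{q^{n-1}}$ (so $q_1=q^{n-1}$) and right factor $A\in\Msetg{q}$ (so $q_0=q$); both exponents are $\geq 2$ since $q\geq 2$. Under the base-$q$ labelling the column index $k_{n-1}\ldots k_0$ of $A^{\otimes n}$ splits as $k=(k_{n-1}\ldots k_1)\,q+k_0$, with $k_0$ indexing the right factor $A$ and $k_{n-1}\ldots k_1$ indexing $A^{\otimes(n-1)}$; note also that the quantity ``$p_{k_0}$'' of Lemma~\ref{lem:seqAB} is $\tfrac12\labs{A_{0,k_0}-A_{q-1,k_0}}=p(A_{(k_0)})$, exactly the $p_{k_0}$ in the statement. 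Lemma~\ref{lem:seqAB} then yields
\[
S_{k_{n-1}\ldots k_0}=q^{n-1}\Seq{k_0}+(q^{n-1}-1)p_{k_0}+(-1)^{p_{k_0}}\,T,
\]
where $T$ is the sequency of the column $k_{n-1}\ldots k_1$ of $A^{\otimes(n-1)}$.

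Next I would expand $T$ by the induction hypothesis. Reading the $(n-1)$-fold digits from least to most significant as $j_m=k_{m+1}$ for $m=0,\ldots,n-2$, the hypothesis gives $T=\sum_{r=0}^{n-2}(-1)^{\sum_{m=0}^{r-1}p_{k_{m+1}}}\lob{q^{\,n-2-r}\Seq{k_{r+1}}+(q^{\,n-2-r}-1)p_{k_{r+1}}}$. Shifting the outer summation index by $r\mapsto r-1$ (and correspondingly the inner index in the exponent) rewrites this as $\sum_{r=1}^{n-1}(-1)^{\sum_{m=1}^{r-1}p_{k_m}}\lob{q^{\,n-1-r}\Seq{k_r}+(q^{\,n-1-r}-1)p_{k_r}}$; multiplying by $(-1)^{p_{k_0}}$ absorbs that factor into the exponent, turning $\sum_{m=1}^{r-1}$ into $\sum_{m=0}^{r-1}$ (valid since each $p_{k_m}\in\{0,1\}$, so $(-1)^{a}(-1)^{b}=(-1)^{a+b}$). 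The remaining term $q^{n-1}\Seq{k_0}+(q^{n-1}-1)p_{k_0}$ is exactly the $r=0$ summand of the target sum — its sign is $(-1)^{0}=1$ by the empty-sum convention — so adding everything together reproduces $\sum_{r=0}^{n-1}(-1)^{\sum_{m=0}^{r-1}p_{k_m}}\lob{q^{\,n-1-r}\Seq{k_r}+(q^{\,n-1-r}-1)p_{k_r}}$, which closes the induction.

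The proof is essentially pure index bookkeeping, and the only place demanding care is keeping the relabelling $j_m=k_{m+1}$ consistent while simultaneously shifting the outer summation index and the nested exponent sum. There is no genuine obstacle: everything substantive about ``how one extra tensor factor changes sequency'' is already contained in Lemma~\ref{lem:seqAB}. Alternatively one could group $A^{\otimes n}=A\otimes A^{\otimes(n-1)}$ and strip off the \emph{most} significant digit, but that variant first requires relating $p$ of a column of $A^{\otimes(n-1)}$ to $\sum_m p_{k_m}$, so peeling off the least significant digit as above is the cleaner route.
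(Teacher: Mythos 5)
Your proposal is correct and follows exactly the route the paper takes: the paper's proof is the one-line remark that the result ``follows from an easy induction argument using Lemma~\ref{lem:seqAB},'' and your write-up simply supplies the bookkeeping (peeling off the least significant digit, applying Lemma~\ref{lem:seqAB} with $q_1=q^{n-1}$, $q_0=q$, and reindexing) that the paper leaves implicit. Your closing observation that stripping the most significant digit instead would require an extra lemma relating $p$ of a column of $A^{\otimes(n-1)}$ to the parity of $\sum_m p_{k_m}$ is a sensible justification for the chosen decomposition.
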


\begin{proof}
The proof follows from an easy induction argument using Lemma \ref{lem:seqAB}.
\end{proof}

\section{Declarations}
\paragraph{Data availability statement:}
Data sharing is not applicable to this article as no datasets were generated or analyzed during the current study.	

\paragraph{Competing interests statement:}	
The authors have no competing interests to declare that are relevant to the content of this article.

\bibliographystyle{unsrt}

\end{document}